\DeclareMathOperator{\adj}{adj}
\newtheorem{example}[theorem]{Example}
\newcommand\coolrightbrace[2]{%
\left.\vphantom{\begin{matrix} #1 \end{matrix}}\right\}#2} 
\newcommand{\cabs}{\kappa_{\mathrm{abs}}}
\newcommand{\crel}{\kappa_{\mathrm{rel}}}
\title{Numerical instability of resultant methods for multidimensional rootfinding} 
\author{Vanni Noferini\thanks{Department of Mathematical Sciences, University of Essex, Wivenhoe Park, Colchester, UK, CO4 3SQ. (\texttt{vnofer@essex.ac.uk})} 
\and Alex Townsend\thanks{Department 
of Mathematics, Massachusetts Institute of Technology, 77 Massachusetts Avenue
Cambridge, MA 02139-4307. (\texttt{ajt@mit.edu})}}
\begin{document}
\maketitle
\slugger{sinum}{xxxx}{xx}{x}{x--x}%slugger should be set to mms, siap, sicomp, sicon, sidma, sima, simax, sinum, siopt, sisc, or sirev

\begin{abstract}
Hidden-variable resultant methods are a class of algorithms for solving 
multidimensional polynomial rootfinding problems. In two dimensions, when 
significant care is taken, they are competitive practical rootfinders. However, in higher
dimensions they are known to miss zeros, calculate roots to low precision, 
and introduce spurious solutions. We show that  
the hidden variable resultant method based on  
the Cayley (Dixon or B\'{e}zout) matrix is inherently 
and spectacularly numerically unstable by a factor that grows exponentially with 
the dimension. We also show that the Sylvester matrix for solving
bivariate polynomial systems can square the condition number of the 
problem. 
In other words, two popular hidden variable resultant methods are numerically 
unstable, and this mathematically explains the difficulties that 
are frequently reported by practitioners. Regardless of how the constructed 
polynomial eigenvalue problem is solved, %such as computing zeros of the resultant, linearization, 
%or the Ehrlich--Aberth method,
 severe numerical difficulties will be present.
Along the way, we prove that the Cayley resultant is a generalization of Cramer's rule for solving linear systems 
and generalize Clenshaw's algorithm to an evaluation scheme for polynomials 
expressed in a degree-graded polynomial basis.
\end{abstract}

\begin{keywords}
resultants, rootfinding, conditioning, multivariate polynomials, Cayley, Sylvester
\end{keywords}

\begin{AMS}
13P15, 65H04, 65F35
\end{AMS}

\pagestyle{myheadings}
\thispagestyle{plain}
\markboth{}{}

\section{Introduction}
Hidden variable resultant methods are a popular class of algorithms 
for global multidimensional rootfinding~\cite{Allgower_92_01,Dressen_12_01,Jonsson_05_01,Nakatsukasa_15_01,Sorber_14_01,Syam_04_01}. They
compute all the solutions to zero-dimensional polynomial systems of the form: 
\begin{equation}
 \begin{pmatrix}  
  p_1(x_1,\ldots,x_d) \cr
  \vdots \cr
  p_d(x_1,\ldots,x_d)\cr
 \end{pmatrix}
  = 0, \qquad (x_1,\ldots,x_d)\in\mathbb{C}^d,
\label{eq:polynomialSystem}
\end{equation} 
where $d\geq 2$ and $p_1,\ldots,p_d$ are polynomials in $x_1,\ldots,x_d$ with complex coefficients. Mathematically, they are based on an elegant idea that 
converts the multidimensional 
rootfinding problem in~\eqref{eq:polynomialSystem} into one or more eigenvalue problems~\cite{Bondyfalat_00_01}. 
At first these methods appear to be a practitioner's 
dream as a difficult rootfinding problem is solved by the robust QR or QZ
algorithm. Desirably, these methods have received 
considerable research attention from the scientific computing 
community~\cite{Buse_05_01,Emiris_94_01,Li_06_01,Weiss_93_01}.

Despite this significant interest,
hidden variable resultant methods are 
notoriously difficult, if not impossible, to make numerically robust. Most naive 
implementations will introduce unwanted spurious solutions, compute roots inaccurately, and 
unpredictably miss zeros~\cite{Boyd_14_01}. Spurious solutions can be 
removed by manually checking that all 
the solutions satisfy~\eqref{eq:polynomialSystem}, inaccurate 
roots can usually be polished by Newton's method, but entirely missing a zero is 
detrimental to a global rootfinding algorithm. 

The higher the polynomial degree $n$ and the dimension
$d$, the more pronounced the numerical difficulties become. Though 
our conditioning bounds do hold for small $n$ and $d$, 
this paper deals with a worst-case analysis. Hence, our conclusions are
 not inconsistent with 
the observation that (at least when $n$ and $d$ are small) resultant methods 
can work very well in practice for some problems. 
When $d = 2$ and real finite solutions are of interest, a careful combination of domain subdivision, 
regularization, and local refinement has been successfully used together with 
the Cayley resultant (also known as the Dixon or B\'{e}zout resultant)
for large $n$~\cite{Nakatsukasa_15_01}. This is the algorithm employed by 
Chebfun for bivariate global rootfinding~\cite{Townsend_13_01}. Moreover, for $d=2$, randomization techniques 
and the QZ algorithm have been combined fruitfully with the Macaulay resultant~\cite{Jonsson_05_01}. 
There are also many other ideas~\cite{Bini_06_01,Demmel_94_01}.
However, these techniques seem to be less successful in higher dimensions.

In this paper, we show that any plain vanilla hidden variable resultant 
method based on the Cayley or Sylvester matrix is a
numerically unstable algorithm for solving a polynomial system. In particular, we show 
that the hidden variable resultant method based on the 
Cayley resultant matrix is numerically unstable for multidimensional 
rootfinding with a factor that grows exponentially with $d$. We show 
that for $d=2$ the Sylvester matrix leads to a hidden variable 
resultant method that can also square the conditioning of a root. 

We believe that this numerical instability has not been analyzed before 
because there are at least two 
other sources of numerical issues: 
(1) The hidden variable resultant method is usually employed with the monomial polynomial basis, which 
can be devastating in practice when $n$ is large, and (2) Some rootfinding 
problems have inherently ill-conditioned zeros and hence, one does not always 
expect accurate solutions. Practitioners can sometimes overcome (1) by 
representing the polynomials $p_1,\ldots,p_d$ in another degree-graded polynomial basis\footnote{A polynomial basis
$\{\phi_0,\ldots,\phi_{n}\}$ for $\mathbb{C}_{n}[x]$ is degree-graded if the degree of $\phi_k(x)$ is 
exactly $k$ for $0\leq k\leq n$.}~\cite{Boyd_14_01}. However, the numerically instability 
that we identify can be observed even when the roots are well-conditioned and 
for degree-graded polynomial basis (which includes the monomial, Chebyshev, and 
Legendre bases). 

We focus on the purely numerical, as opposed to symbolic, algorithm. We take 
the view that every arithmetic operation is performed in finite precision. 
There are many other rootfinders that either employ only symbolic manipulations~\cite{Buchberger_65_01} 
or some kind of symbolic-numerical hybrid~\cite{Emiris_02_01}. Similar careful symbolic manipulations 
may be useful in overcoming the numerical instability that 
we identify. For example, it may be possible to somehow 
transform the polynomial system~\eqref{eq:polynomialSystem} into 
one that the resultant method treats in a numerical stable manner. 

This paper may be considered as a bearer of bad news. Yet, we take the opposite 
and more optimistic view. We are intrigued by the potential 
positive impact this paper could have on 
rootfinders based on resultants since once a numerical 
instability has been identified the community is much better placed to 
circumvent the issue.

We use the following notation. The space of univariate polynomials with complex 
coefficients of degree at most $n$ is denoted by $\mathbb{C}_{n}[x]$, the space 
of $d$-variate polynomials of maximal degree $n$ in the variables $x_1,\ldots,x_d$ is denoted by 
$\mathbb{C}_n[x_1,\ldots,x_d]$, and if $\mathcal{V}$ is a vector space then the 
Cartesian product space $\mathcal{V}\times\cdots\times\mathcal{V}$ (d-times) is 
denoted by $(\mathcal{V})^d$. Finally, we use ${\rm vec( V )}$ to be the 
vectorization of the matrix or tensor $V$ to a column vector (this is equivalent 
to \texttt{V(:)} in MATLAB). 

Our setup is as follows. First, we suppose that a degree-graded polynomial basis 
for $\mathbb{C}_{n}[x]$, denoted by $\phi_0,\ldots,\phi_{n}$, has been 
selected. All polynomials will be represented using this basis. Second, a 
region of interest $\Omega^d\subset \mathbb{C}^{d}$ is chosen such 
that $\Omega^d$, where 
$\Omega^d$ is the tensor-product domain $\Omega\times \cdots\times\Omega$ ($d$ times), 
contains all the roots that would like to be computed accurately. 
The domain $\Omega\subset\mathbb{C}$ can be a real interval or a bounded 
region in the complex plane.
Throughout, we suppose that $\sup_{x\in\Omega}|\phi_k(x)| =1$ for $0\leq k\leq n$, which is 
a very natural normalization.

Our two main results are in Theorem~\ref{thm:condTheorem} and Theorem~\ref{thm:condSylvester}.  Together they show that there exist $p_1,\ldots,p_d$ in~\eqref{eq:polynomialSystem} 
such that 
\[
\underbrace{\kappa(x_d^{\ast}, R)}_{\text{Cond.~no.~of the eigenproblem}} \geq (\underbrace{\|J(\underline{x}^{\ast})^{-1} \|_2}_{\text{Cond.~no.~of $\underline{x}^{\ast}$}})^d,
\]
where $R$ is either the Cayley (for any $d\geq2$) or Sylvester (for $d = 2$) resultant matrix. 
Such a result shows that in the absolute sense the eigenvalue problem employed by 
these two resultant-based methods can be significantly more sensitive to perturbations 
than the corresponding root. Together with results about relative conditioning, we conclude that these 
rootfinders are numerically unstable (see Section~\ref{sec:absRel}). 

In the next section we first introduce multidimensional 
resultants and describe hidden variable resultant methods for rootfinding. In 
Section~\ref{sec:Cayley} we show that the hidden variable resultant method based 
on the Cayley resultant suffers from numerical instability and in Section~\ref{sec:Sylvester} 
we show that the Sylvester matrix has a similar instability for $d=2$. 
In Section~\ref{sec:absRel} we explain why our absolute conditioning analysis 
leads to an additional twist when considering relative conditioning. 
Finally, in Section~\ref{sec:futureWork} we present a brief outlook on future 
directions. 

\section{Background material}
This paper requires some knowledge of 
multidimensional rootfinding, hidden variable resultant methods, 
matrix polynomials, and conditioning analysis. In this section we briefly review 
this material.  

\subsection{Global multidimensional rootfinding}
Global rootfinding in high dimensions can be a difficult and computationally expensive 
task.
Here, we are concerned with the easiest situation where~\eqref{eq:polynomialSystem}
has only simple finite roots.
\begin{definition}[Simple root]  
Let $\underline{x}^{\ast} = (x_1^{\ast},\ldots,x_d^{\ast})\in \mathbb{C}^d$ be
a solution to the zero-dimensional polynomial system~\eqref{eq:polynomialSystem}.  
Then, we say that $\underline{x}^{\ast}$ is a simple root of~\eqref{eq:polynomialSystem} 
if the Jacobian matrix $J(\underline{x}^{\ast})$ is invertible, where 
 \begin{equation}
 \qquad J(\underline{x}^{\ast}) = \begin{bmatrix}\frac{\partial p_1}{\partial x_1}(\underline{x}^{\ast}) & \ldots & \frac{\partial p_1}{\partial x_d}(\underline{x}^{\ast})\\[5pt] \vdots & \ddots & \vdots \\[5pt] \frac{\partial p_d}{\partial x_1}(\underline{x}^{\ast}) & \ldots & \frac{\partial p_d}{\partial x_d}(\underline{x}^{\ast})\end{bmatrix}\in\mathbb{C}^{d\times d}. 
 \label{eq:Jacobian}
 \end{equation} 
\end{definition}

If $J(\underline{x}^{\ast})$ is 
not invertible then the problem is ill-conditioned, and 
a numerically stable algorithm working in
finite precision arithmetic may introduce a spurious solution or may 
miss a non-simple root entirely.  We will consider the roots of~\eqref{eq:polynomialSystem} that 
are well-conditioned (see Proposition~\ref{def:conditioning}), finite, and simple. 
%Unfortunately, even in this 
%ideal scenario any hidden variable 
%resultant method based on a strong or weak Jacobian-type resultant matrix
%may not compute the solutions to~\eqref{eq:polynomialSystem} accurately (see Section~\ref{subsec:mainResult}). 

Our focus is on the accuracy of hidden variable resultant methods, 
not computational speed. In general, one 
cannot expect to have a ``fast'' algorithm for global multidimensional 
rootfinding. This is because the zero-dimensional polynomial system in~\eqref{eq:polynomialSystem}
can potentially have a large number of solutions. To say exactly how many solutions
there can be, we first must be more 
precise about what we mean by the degree of a polynomial in the multidimensional setting~\cite{Sommese_05_01}.

\begin{definition}[Polynomial degree]
 A $d$-variate polynomial $p(x_1,\ldots,x_d)$ has total degree $\leq n$ if
 \[
  p(x_1,\ldots,x_d) = \sum_{i_1+\cdots+i_d\leq n} A_{i_1,\ldots,i_d} \prod_{k=1}^d \phi_{i_k}(x_k)
 \]
 for some tensor $A$. It is of total degree $n$ if one of the terms 
 $A_{i_1,\ldots,i_d}$ with $i_1+\cdots+i_d= n$ is nonzero. 
 Moreover, $p(x_1,\ldots,x_d)$ has maximal degree $\leq n$ if 
 \[
  p(x_1,\ldots,x_d) = \sum_{i_1,\ldots,i_d=0}^n A_{i_1,\ldots,i_d} \prod_{k=1}^d \phi_{i_k}(x_k)
 \]
 for some tensor $A$ indexed by $0\leq i_1,\ldots,i_d\leq n$. It is of maximal degree $n$ if one of the terms 
 $A_{i_1,\ldots,i_d}$ with $\max(i_1,\ldots,i_d) = n$ is nonzero. 
\end{definition}

B\'{e}zout's Lemma says that if~\eqref{eq:polynomialSystem} involves polynomials of 
total degree $n$, then there are at most $n^d$ solutions~\cite[Chap.~3]{Kirwan_92_01}. 
For polynomials of maximal degree we have the following analogous bound (see also~\cite[Thm.~5.1]{Townsend_14_01}). 
\begin{lemma} 
 The zero-dimensional polynomial system in~\eqref{eq:polynomialSystem}, where $p_1,\ldots,p_d$ 
 are of maximal degree $n$, can have at most $d! n^d$ solutions.
 \label{lem:MaxSolutions}
\end{lemma}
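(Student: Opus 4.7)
The plan is to deduce the bound from an intersection-number computation on the natural compactification $X := (\mathbb{P}^1)^d$. First, I would observe that the notion ``maximal degree $\leq n$'' is basis-independent: since each $\phi_{i_k}(x_k)$ with $i_k \leq n$ expands in monomials to a univariate polynomial in $x_k$ of degree at most $n$, any polynomial satisfying the paper's definition also has monomial degree at most $n$ in each variable; conversely, since $\{\phi_0,\ldots,\phi_n\}$ spans $\mathbb{C}_n[x]$, every such monomial re-expands in the $\phi$-basis with all indices $\leq n$. Hence without loss of generality I may assume the monomial basis, with each $p_i$ of individual degree at most $n$ in each $x_k$.

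Next, I would embed $\mathbb{C}^d$ into $X = (\mathbb{P}^1)^d$ via the standard affine chart in each factor. A polynomial of individual degree at most $n$ in every variable multi-homogenizes to a section of the line bundle $\mathcal{O}_X(n,\ldots,n)$, whose zero locus is an effective divisor $D_i$ of cohomology class $[D_i] = n(h_1 + \cdots + h_d)$, where $h_k \in H^2(X,\mathbb{Z})$ is the pullback of the point class from the $k$-th factor and $H^*(X,\mathbb{Z}) \cong \mathbb{Z}[h_1,\ldots,h_d]/(h_1^2,\ldots,h_d^2)$. Under the zero-dimensionality hypothesis, the number of affine solutions is bounded above by the top intersection number
\[
[D_1] \cdots [D_d] = n^d (h_1 + \cdots + h_d)^d.
\]
Expanding by the multinomial theorem and using $h_k^2 = 0$, only the $d!$ orderings of $h_1 h_2 \cdots h_d$ survive, each evaluating to the class of a single point in $X$. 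The intersection number thus equals $d!\,n^d$, and since affine solutions inject into intersection points on $X$, the claimed bound follows.

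The main obstacle is the rigorous treatment of possible excess or non-isolated intersection on the boundary divisor $X \setminus \mathbb{C}^d$. The cleanest resolution is to invoke the multi-homogeneous B\'{e}zout theorem as a black box, since it certifies that the intersection number $d!\,n^d$ upper-bounds the isolated affine solution count irrespective of boundary pathologies. A more self-contained alternative would proceed by induction on $d$ via iterated Sylvester resultants: eliminating one variable from two polynomials of bidegree $(n,n)$ produces a resultant of degree at most $2n^2$ in the remaining variables, and iterating $d-1$ times while tracking the $d!$ orderings of the variables being eliminated recovers the same bound, though the combinatorial bookkeeping is noticeably more intricate than the cohomological calculation.
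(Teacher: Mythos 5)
Your proposal is correct and ultimately rests on the same foundation as the paper's proof: the paper simply cites the multihomogeneous B\'{e}zout theorem of Sommese and Wampler and evaluates the bound as a permanent equal to $d!\,n^d$, which is exactly the intersection number you compute cohomologically on $(\mathbb{P}^1)^d$ before conceding that the boundary/excess-intersection issue is handled by invoking that same theorem as a black box. Your write-up is a more detailed unpacking of the identical argument, so no substantive difference in approach.
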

\begin{proof} 
 This is the multihomogeneous B\'{e}zout bound, see~\cite[Thm.~8.5.2]{Sommese_05_01}. 
 For polynomials of maximal degree $n$ the bound is simply ${\rm perm}(nI_d) = d!n^d$, 
 where $I_d$ is the $d\times d$ identity matrix and ${\rm perm}(A)$ is the permanent 
 of $A$. 
%  
%  the polynomial 
%  system~\eqref{eq:polynomialSystem} can have no more solutions than the so-called 
%  mixed volume bound. For polynomials of maximal degree this bound 
%  can be calculated as follows:
%  \[
%  \sum_{k=1}^d (-1)^{d-k} \binom{d}{k} (kn)^d =  n^d\sum_{k=1}^d (-1)^{d-k} \binom{d}{k}k^d = d!n^d.
%  \]
\end{proof}

We have selected maximal degree, rather than total degree, because 
maximal degree polynomials are more closely linked to tensor-product constructions 
and make later analysis in the multidimensional setting easier.  We do not know 
how to repeat the same analysis when the polynomials are represented in a sparse 
basis set. 

Suppose that the polynomial system~\eqref{eq:polynomialSystem} contains 
polynomials of maximal degree $n$. Then, to verify that $d!n^d$ candidate points are solutions the polynomials 
$p_1,\ldots,p_d$ must be evaluated, costing $\mathcal{O}(n^{2d})$ operations. 
Thus, the optimal worst-case complexity is $\mathcal{O}(n^{2d})$.
For many applications global rootfinding is computationally unfeasible 
and instead local methods such as Newton's method and homotopy continuation 
methods~\cite{Bates_13_01} can be employed to compute a subset of the 
solutions. Despite the fact that 
global multidimensional rootfinding is a computationally intensive task, we 
still desire a numerically stable algorithm. A survey of numerical rootfinders 
is given in~\cite[Chap.~5]{Townsend_14_01}. 

When $d = 1$, global numerical rootfinding can be done satisfactorily 
even with polynomial degrees in the thousands. Excellent 
numerical and stable rootfinders can be built using domain subdivision~\cite{Boyd_02_01}, 
eigenproblems with colleague or comrade matrices~\cite{Good_61_01}, and a 
careful treatment of dynamic range issues~\cite{Boyd_02_01}.

\subsection{Hidden variable resultant methods}%\label{subsec:HiddenVariable}
The first step of a hidden variable resultant method is to select 
a variable, say $x_d$, and regard the $d$-variate polynomials $p_1,\ldots,p_d$ in~\eqref{eq:polynomialSystem} 
as polynomials in $x_1,\ldots,x_{d-1}$ with complex coefficients that depend on $x_d$. 
That is, we ``hide'' $x_d$ by rewriting $p_k(x_1,\ldots,x_d)$ for $1\leq k\leq d$ as 
\[
 p_k(x_1,\ldots,x_{d-1},x_d) = p_k[x_d](x_1,\ldots,x_{d-1}) = \sum_{i_1,\ldots,i_{d-1}=0}^{n} c_{i_1,\ldots,i_{d-1}}(x_d)\prod_{s=1}^{d-1}\phi_{i_s}(x_s),
\] 
where $\{\phi_{0},\ldots,\phi_{n}\}$ is a degree-graded polynomial 
basis for $\mathbb{C}_{n}[x]$. 
This new point of view rewrites~\eqref{eq:polynomialSystem} as a system of 
$d$ polynomials in $d-1$ variables. We now seek all the 
$x_d^{\ast}\in\mathbb{C}$ such that $p_1[x_d^{\ast}],\ldots,p_d[x_d^{\ast}]$ 
have a common root in $\Omega^{d-1}$. 
Algebraically, this can be achieved by using a multidimensional resultant~\cite[Chap.~13]{Gelfand_08_01}. 

\begin{definition}[Multidimensional resultant] 
 Let $d\geq 2$ and $n\geq 0$. A functional $\mathcal{R}:(\mathbb{C}_{n}[x_1,\ldots,x_{d-1}])^{d}\rightarrow\mathbb{C}$
 is a multidimensional resultant if, for any set of $d$ polynomials 
 $q_1,\ldots,q_{d}\in\mathbb{C}_{n}[x_1,\ldots,x_{d-1}]$, $\mathcal{R}(q_1,\ldots,q_{d})$ 
 is a 
 polynomial in the coefficients of $q_1,\ldots,q_{d}$ and $\mathcal{R}(q_1,\ldots,q_{d}) = 0$ if and only 
 if there exists an $\underline{x}^\ast\in\tilde{\mathbb{C}}^{d-1}$ such that $q_k(\underline{x}^{\ast})=0$ 
 for $1\leq k\leq d$, where $\tilde{\mathbb{C}}$ denotes the extended complex plane\footnote{To make sense of 
 solutions at infinity one can work with homogeneous polynomials~\cite{Cattani_05_01}.}. 
\label{def:resultantMulti}
 \end{definition}

Definition~\ref{def:resultantMulti} defines $\mathcal{R}$ up to a nonzero multiplicative 
constant~\cite[Thm.~1.6.1]{Cattani_05_01}. In the monomial basis it is standard to normalize $\mathcal{R}$ so that $\mathcal{R}(x_1^n,\ldots,x_{d-1}^n,1)=1$~\cite[Thm.~1.6.1(ii)]{Cattani_05_01}.  For nonmonomial bases, we are not aware of any standard normalization.

Assuming~\eqref{eq:polynomialSystem} only has finite solutions, if 
$\mathcal{R}$ is a multidimensional resultant then for any $x_d^{\ast}\in\mathbb{C}$ we have 
\[
 \mathcal{R}(p_1[x_d^{\ast}],\ldots,p_d[x_d^{\ast}])\!=\!0 \!\quad \!\Longleftrightarrow\!\quad\! \exists (x_1^{\ast},\ldots,x_{d-1}^{\ast})\in\mathbb{C}^{d-1}\text{ s.t. }p_1(\underline{x}^{\ast})\!=\!\cdots\!=\!p_d(\underline{x}^{\ast})\!=\!0, 
\]
where $\underline{x}^{\ast} = (x_1^{\ast},\ldots,x_d^{\ast})\in\mathbb{C}^d$. 
Thus, we can calculate the $d$th component of all the solutions of interest 
by computing the roots of $\mathcal{R}(p_1[x_d],\ldots,p_d[x_d])$ and discarding 
those outside of $\Omega$. 
In principle, since $\mathcal{R}(p_1[x_d],\ldots,p_d[x_d])$ is a univariate 
polynomial in $x_d$ it is an easy task. 
However, numerically, $\mathcal{R}$ is typically near-zero in large regions of $\mathbb{C}$, and 
spurious solutions as well as missed zeros plague this approach in finite precision arithmetic 
(see Figure~\ref{fig:ResultantTrouble}). Thus, directly computing the roots 
of $\mathcal{R}$ is spectacularly numerically unstable for almost all $n$ and 
$d$. This approach is rarely advocated in practice. 
\begin{figure} 
 \centering 
 \begin{overpic}[trim = 0 18 0 0,clip,width=.6\textwidth]{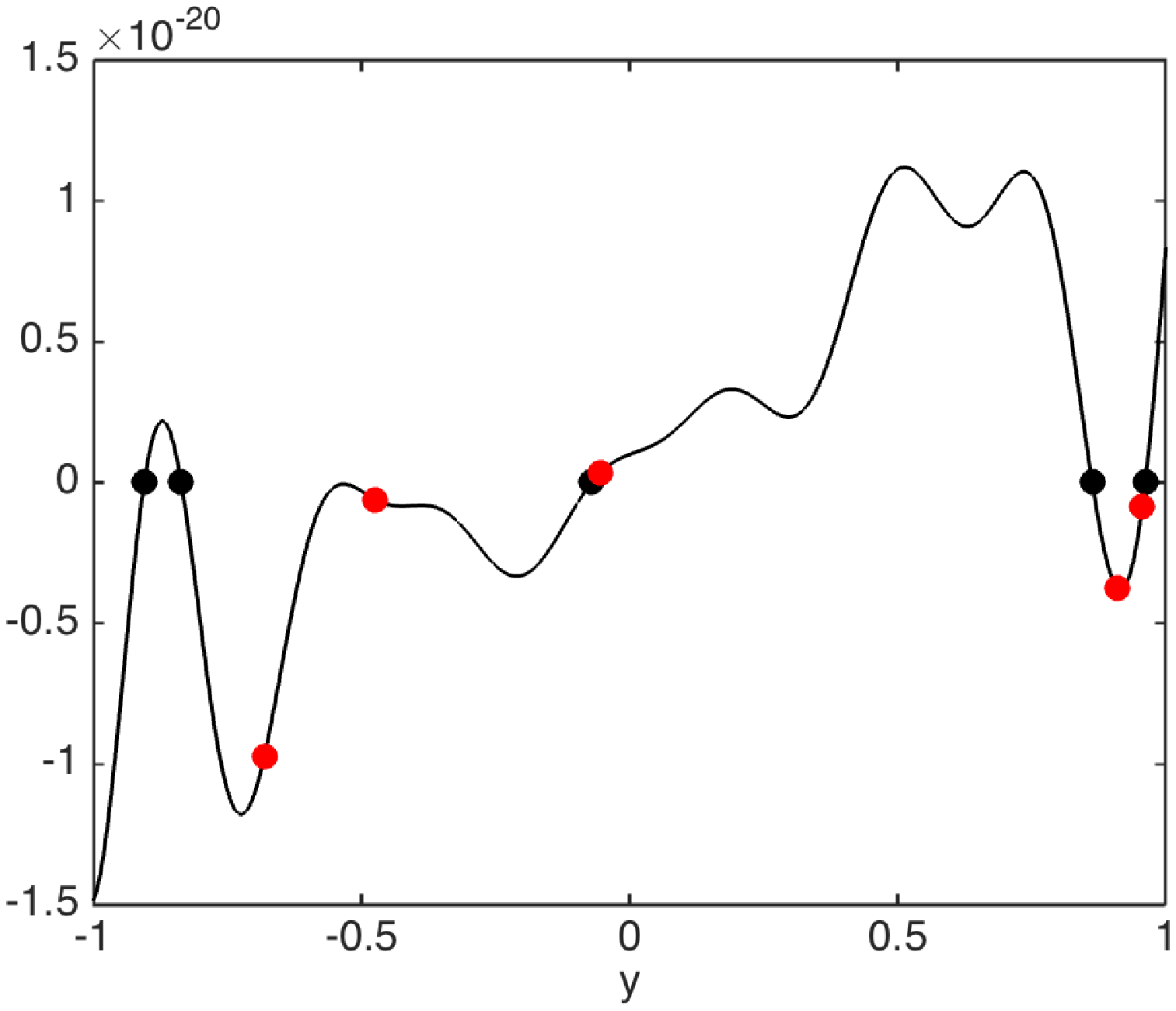}
\put(15,52) {Missed zeros}
\put(30,10) {Spurious solution}
\put(50,25) {Inaccurate root}
\put(30,50) {\vector(-1,-1){7}}
\put(42,15) {\vector(-1,2){7}}
\put(70,29) {\vector(2,1){10}}
\put(50,-2.5) {$x_2$}
\end{overpic} 
 \caption{Mathematically, the zeros of 
 $\mathcal{R}(p_1[x_d],\ldots,p_d[x_d])$ are the $d$th 
 component of the solutions to~\eqref{eq:polynomialSystem}. However, numerically
 the polynomial $\mathcal{R}(p_1[x_d],\ldots,p_d[x_d])$ can be numerically 
 close to zero everywhere. 
 Here, we depict the typical behavior of the polynomial $\mathcal{R}(p_1[x_d],\ldots,p_d[x_d])$ 
 when $d = 2$, where the black dots are the exact zeros and the squares 
 are the computed roots. In practice, it can be difficult to distinguish between spurious solutions and roots that are computed inaccurately.}
 \label{fig:ResultantTrouble} 
\end{figure}

Instead, one often considers an associated multidimensional resultant 
matrix whose determinant is equal to 
$\mathcal{R}$. Working with matrices rather than determinants is 
beneficial for practical computations, especially when $d = 2$~\cite{Dressen_12_01,Nakatsukasa_15_01,Sorber_14_01}. 
Occasionally, this variation on hidden variable resultant methods is called 
{\em numerically confirmed eliminants} to highlight its improved numerical 
behavior~\cite[Sec.~6.2.2]{Sommese_05_01}. However, we will show that even after this significant improvement
the hidden variable resultant methods based on the Cayley and Sylvester resultant matrices remain 
numerically unstable.
\begin{definition}[Multidimensional resultant matrix] 
 Let $d\geq 2$, $n\geq 0$, $N\geq 1$, and $\mathcal{R}$ be a multidimensional resultant (see Defintion~\ref{def:resultantMulti}). A 
 matrix-valued function $R:(\mathbb{C}_{n}[x_1,\ldots,x_{d-1}])^d\rightarrow \mathbb{C}^{N\times N}$ is a
 multidimensional resultant matrix associated with $\mathcal{R}$ if for 
 any set of $d$ polynomials $q_1,\ldots,q_{d}\in\mathbb{C}_{n}[x_1,\ldots,x_{d-1}]$ we have
 \[
  \det \left( R(q_1,\ldots,q_d) \right) = \mathcal{R}(q_1,\ldots,q_d). 
 \]
\end{definition}

There are many types of resultant matrices including Cayley (see Section~\ref{sec:Cayley}), Sylvester (see Section~\ref{sec:Sylvester}), 
Macaulay~\cite{Jonsson_05_01}, and others~\cite{Emiris_94_01,Kapur_95_01,Manocha_92_01}. 
In this paper we only consider two of the most popular choices: Cayley and Sylvester resultant matrices. 

%Assuming~\eqref{eq:polynomialSystem} only has finite solutions we see that if 
%$R$ is a multidimensional resultant matrix, then for any $x_d^{\ast}\in\mathbb{C}$ we have 
%\[
% \det(R(p_1[x_d^{\ast}],\ldots,p_d[x_d^{\ast}]))\!=\!0 \!\!\quad \!\!\!\Longleftrightarrow\!\!\!\quad \!\!
% \exists (x_1^{\ast},\ldots,x_{d-1}^{\ast})\in\mathbb{C}^{d-1}\text{ s.t. }p_1(\underline{x}^{\ast})\!=\!\cdots\!=\!p_d(\underline{x}^{\ast})\!=\!0, 
%\]
%where $\underline{x}^{\ast} = (x_1^{\ast},\ldots,x_d^{\ast})\in\mathbb{C}^d$. Thus, 
%we can calculate the $d$th component of the solutions by finding all the 
%$x_d^{\ast}\in\mathbb{C}$ such that $\det(R(p_1[x_d^{\ast}],\ldots,p_d[x_d^{\ast}]))=0$ and then 
%discarding those outside of $\Omega$.

Theoretically, we can calculate the $d$th component of the solutions by finding all the 
$x_d^{\ast}\in\mathbb{C}$ such that $\det(R(p_1[x_d^{\ast}],\ldots,p_d[x_d^{\ast}]))=0$. 
In practice, our analysis will show that this $d$th component cannot always be accurately 
computed. 

Each entry of the matrix $R(p_1[x_d],\ldots,p_d[x_d])$ is a polynomial in $x_d$ of finite degree.
In linear algebra such objects are called matrix polynomials (or polynomial matrices)
and finding the solutions of $\det(R(p_1[x_d],\ldots,p_d[x_d]))=0$ is related to
a polynomial eigenproblem~\cite{Bini_13_01, Mackey_06_01, Tisseur_01_01}. 

\subsection{Matrix polynomials}\label{sec:matrixpoly}
Since multidimensional resultant matrices are matrices with univariate 
polynomial entries, matrix polynomials
 play an important role in the 
hidden variable resultant method.  A classical reference on matrix polynomials 
is the book by Gohberg, Lancaster, and Rodman~\cite{Gohberg_82_01}. 
\begin{definition}[Matrix polynomial]  
 Let $N\geq 1$ and $K\geq 0$. We say that $P(\lambda)$ is a (square) matrix 
 polynomial of size $N$ and degree $K$ if $P(\lambda)$ is an $N\times N$ matrix whose entries 
 are univariate polynomials
 in $\lambda$ of degree $\leq K$, where at least one entry is of degree exactly $K$. 
\end{definition}

In fact, since~\eqref{eq:polynomialSystem}
is a zero-dimensional polynomial system it can only have a finite number of 
isolated solutions and hence, the matrix polynomials we consider 
are regular~\cite{Gohberg_82_01}. 
\begin{definition}[Regular matrix polynomial]  
 We say that a square matrix polynomial $P(\lambda)$ is regular if 
 $\det (P(\lambda))\neq 0$ for some $\lambda\in\mathbb{C}$. 
\end{definition}

A matrix polynomial $P(\lambda)$ of size $N$ and degree $K$ can 
be expressed in a degree-graded polynomial basis as 
\begin{equation} 
 P(\lambda) = \sum_{i=0}^{K} A_i \phi_i(\lambda), \qquad A_i\in\mathbb{C}^{N\times N}.
\label{eq:matPoly}
\end{equation}
When the leading coefficient matrix $A_K$ in~\eqref{eq:matPoly} 
is invertible the eigenvalues of $P(\lambda)$ are all finite, and 
they satisfy $\det(P(\lambda)) = 0$. 
\begin{definition}[Eigenvector of a regular matrix polynomial]
Let $P(\lambda)$ be a regular matrix polynomial of size $N$ and degree $K$. 
If $\lambda\in\mathbb{C}$ is finite and there exists a non-zero vector $v\in \mathbb{C}^{N\times 1}$ such that 
$P(\lambda)v = 0$ (resp.~$v^TP(\lambda)=0$), then we say that  $v$ 
is a {\em right (resp.~left) eigenvector} of $P(\lambda)$ corresponding 
to the eigenvalue $\lambda$. 
\label{def:MatpolyEigenvector}
\end{definition}

For a regular matrix polynomial $P(\lambda)$ we have the following relationship 
between its eigenvectors and determinant~\cite{Gohberg_82_01}: For any finite $\lambda\in\mathbb{C}$, 
\[
\det(P(\lambda)) = 0  \quad\Longleftrightarrow\quad \exists v\in \mathbb{C}^{N\times 1}\setminus\{0\},\quad P(\lambda)v = 0.  
\]

In multidimensional rootfinding, one sets $P(\lambda) = R(p_1[\lambda],\ldots,p_d[\lambda])$ and 
solves $\det(P(\lambda))=0$ via the polynomial eigenvalue problem 
$P(\lambda)v = 0$. There are various algorithms for solving $P(\lambda)v = 0$ including 
linearization~\cite{Gohberg_82_01, Mackey_06_01, Tisseur_01_01}, the Ehrlich--Aberth method~\cite{Bini_13_01, Gemignani_13_01, Taslaman_15_01}, and 
contour integration~\cite{Asakura_10_01}.
However, regardless of how the polynomial eigenvalue problem is
solved in finite precision, the hidden variable resultant method 
based on the Cayley or the Sylvester matrix is numerically unstable.

For the popular resultant matrices, such as Cayley and Sylvester,
the first $d-1$ components of the solutions can be determined from the
left or right eigenvectors of $R(p_1[x_d^{\ast}],\ldots,p_d[x_d^{\ast}])$. For instance, 
if linearization is employed, the multidimensional rootfinding problem is converted into one 
(typically very large) eigenproblem, which can be solved by the QR or QZ algorithm. 
Practitioners often find that the computed eigenvectors are not accurate enough to adequately
determine the $d-1$ components. However, the blame for the observed numerical instability is not only
on the eigenvectors, but also the eigenvalues.  Our analysis will show that the $d$th component
may not be computed accurately either.

\subsection{Conditioning analysis} 
Not even a numerically stable algorithm can be expected to accurately compute a simple 
root of~\eqref{eq:polynomialSystem} if that root is itself sensitive to small 
perturbations. Finite precision arithmetic 
almost always introduces roundoff errors and if these can cause large 
perturbations in a root then that solution is ill-conditioned. 

The absolute condition number of a simple root measures how sensitive the location 
of the root is to small perturbations in $p_1,\ldots,p_d$.
\begin{proposition}[The absolute condition number of a simple root]
 Let $\underline{x}^{\ast} = (x_1^{\ast},\ldots,x_d^{\ast})\in \mathbb{C}^d$ 
 be a simple root of~\eqref{eq:polynomialSystem}. 
 The absolute condition number of $\underline{x}^{\ast}$ associated with rootfinding is 
 $\|J(\underline{x}^{\ast})^{-1}\|_2$, i.e., the matrix $2$-norm of the 
 inverse of the Jacobian.
 \label{def:conditioning}
\end{proposition}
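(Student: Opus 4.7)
The plan is to apply first-order perturbation analysis via the implicit function theorem. The simple-root hypothesis makes $J(\underline{x}^\ast)$ invertible, which is precisely what is needed to turn a perturbation of the system into a unique infinitesimal displacement of the root.

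First I would fix the perturbation model. I would consider a one-parameter family of perturbed systems $p_k + \epsilon \tilde p_k = 0$ for $k=1,\ldots,d$, where each $\tilde p_k \in \mathbb{C}_n[x_1,\ldots,x_d]$. Since $J(\underline{x}^\ast)$ is nonsingular, the implicit function theorem yields a smooth branch $\underline{x}(\epsilon)$ of roots of the perturbed system with $\underline{x}(0)=\underline{x}^\ast$. Differentiating the identity $(p_k+\epsilon\tilde p_k)(\underline{x}(\epsilon))=0$ in $\epsilon$ at $\epsilon=0$ and using $p_k(\underline{x}^\ast)=0$ gives
\[
J(\underline{x}^\ast)\,\underline{x}'(0) \;=\; -\tilde p(\underline{x}^\ast),
\qquad \tilde p(\underline{x}^\ast):=\bigl(\tilde p_1(\underline{x}^\ast),\ldots,\tilde p_d(\underline{x}^\ast)\bigr)^T,
\]
so $\underline{x}'(0) = -J(\underline{x}^\ast)^{-1}\tilde p(\underline{x}^\ast)$. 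Thus to leading order in $\epsilon$ the displacement of the root is obtained by applying $J(\underline{x}^\ast)^{-1}$ to the vector of residuals that the perturbation produces at the unperturbed root.

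Next I would take the supremum. The absolute condition number, by definition, is $\sup \|\underline{x}'(0)\|_2$ taken over perturbations normalized so that $\|\tilde p(\underline{x}^\ast)\|_2 \leq 1$. The formula above immediately gives the inequality $\kappa_{\mathrm{abs}} \leq \|J(\underline{x}^\ast)^{-1}\|_2$. For the reverse direction I need to exhibit a perturbation realizing any prescribed unit vector $v\in\mathbb{C}^d$ as $\tilde p(\underline{x}^\ast)$: this is easily done because the constant polynomial $\phi_0$ lies in every degree-graded basis (indeed, with the stated normalization $\sup_{x\in\Omega}|\phi_0|=1$ we may take $\phi_0\equiv 1$), so the choice $\tilde p_k \equiv v_k$ is admissible and makes $\tilde p(\underline{x}^\ast)=v$. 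Picking $v$ to be a leading right singular vector of $J(\underline{x}^\ast)^{-1}$ saturates the operator-norm bound, yielding $\kappa_{\mathrm{abs}}=\|J(\underline{x}^\ast)^{-1}\|_2$.

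The only real obstacle is the pedestrian one of fixing a perturbation model that makes the equality tight rather than just an inequality; the computation itself is one line of implicit differentiation. Allowing perturbations by arbitrary polynomials in $\mathbb{C}_n[x_1,\ldots,x_d]$ (with the size of the perturbation measured by $\|\tilde p(\underline{x}^\ast)\|_2$) handles this, since it gives full access to every direction in $\mathbb{C}^d$ at the evaluation point. Any alternative norm on the coefficient space could still be reduced to this one via the evaluation map, possibly introducing an auxiliary constant but not altering the structural identity $\kappa_{\mathrm{abs}} = \|J(\underline{x}^\ast)^{-1}\|_2$ that drives the rest of the paper.
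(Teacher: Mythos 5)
Your argument is correct and is essentially the standard one: the paper itself gives no proof here, only a citation to \cite{Nakatsukasa_15_01}, and the proof there is the same first-order linearization $\delta\underline{x} \approx -J(\underline{x}^\ast)^{-1}\,\delta p(\underline{x}^\ast)$ followed by saturation of the operator norm with a singular vector. Your one loose end --- the choice of norm on the perturbation space --- is handled correctly by your remark: with the paper's normalization $\sup_{x\in\Omega}|\phi_k|=1$ one measures $\|\delta p\|$ via the sup-norms of the $\delta p_k$ on $\Omega^d$, so $|\delta p_k(\underline{x}^\ast)|\leq\|\delta p_k\|_\infty$ gives the upper bound and your constant perturbations $\tilde p_k\equiv v_k$ attain it exactly, with no auxiliary constant.
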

\begin{proof}
See~\cite{Nakatsukasa_15_01}.  
\end{proof}

As a rule of thumb, a numerically stable rootfinder should be able to compute a simple root 
$\underline{x}^{\ast}\in\mathbb{C}^d$ to an accuracy of
$\mathcal{O}(\max(\|J(\underline{x}^{\ast})^{-1}\|_2,1) u)$, where $u$ is the unit machine roundoff.  
In contrast, regardless of the condition number of $\underline{x}^{\ast}$, a numerically unstable rootfinder may not compute it accurately. Worse still, it 
may miss solutions with detrimental consequences. 

A hidden variable resultant method computes the $d$th component of the solutions 
by solving the polynomial eigenvalue problem $R(p_1[x_d],\ldots,p_d[x_d])v = 0$.
The following condition number 
tells us how sensitive an eigenvalue is to small perturbations in $R$~\cite[(12)]{Nakatsukasa_15_01} (also see~\cite{Tisseur_00_01}):
\begin{definition}[The absolute condition number of an eigenvalue of a regular matrix polynomial]
 Let $x_d^{\ast}\in\mathbb{C}$ be a finite eigenvalue of $R(x_d) \!=\! R(p_1[x_d],\ldots,p_d[x_d])$. 
The condition number of $x_d^{\ast}$ associated with the eigenvalue 
problem $R(x_d)v = 0$ is
\begin{equation}
\label{eq:condbezform}
\kappa(x_d^{\ast},R) = \lim_{\epsilon\rightarrow 0^+}\sup\left\{\frac{1}{\epsilon}\min|\hat{x}_d-x_d^{\ast}|:\det(\widehat{R}(\hat{x}_d)) = 0\right\},
\end{equation}
where the supremum is taken over the set of matrix polynomials $\widehat{R}(x_d)$ such that $\max_{x_d\in\Omega} \|\widehat{R}(x_d)-R(x_d)\|_2\leq \epsilon$. 
\label{def:condEigenvalue}
\end{definition}

A numerical polynomial eigensolver
can only be expected to compute the eigenvalue $x_d^{\ast}$ satisfying $R(x_d^{\ast})v=0$ to an accuracy of
$\mathcal{O}(\max(\kappa(x_d^{\ast},R),1)u)$, where $u$ is unit machine roundoff.
We will be interested in how $\kappa(x_d^{\ast},R)$ relates to the condition number, 
$\|J(\underline{x}^{\ast})^{-1}\|_2$, of the corresponding root. 

It can be quite difficult to calculate $\kappa(x_d^{\ast},R)$ directly 
from~\eqref{eq:condbezform}, and is usually more convenient to use the formula below. 
(Related formulas can be found in~\cite[Thm.~1]{Nakatsukasa_15_01} for symmetric matrix 
polynomials and in~\cite[Thm.~5]{Tisseur_00_01} for general matrix polynomials.)
\begin{lemma} 
 Let $R(x_d)$ be a regular matrix polynomial with finite simple eigenvalues. 
 Let $x_d^{\ast}\in\mathbb{C}$ be an eigenvalue of $R(x_d)$ with corresponding 
 right and left eigenvectors $v,w\in\mathbb{C}^{N\times 1}$. Then, we have
 \[
  \kappa(x_d^{\ast},R) = \frac{\|v\|_2\|w\|_2}{|w^TR'(x_d)v|}, 
 \]
 where $R'(x_d)$ denotes the derivative of $R$ with respect to $x_d$. 
 \label{lem:conditionExpression}
\end{lemma}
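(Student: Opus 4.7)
The plan is to carry out standard first-order perturbation analysis on the perturbed matrix polynomial and then verify that the resulting upper bound is actually attained by a specific choice of perturbation. I would first fix an admissible perturbation $\widehat{R}(x_d) = R(x_d) + \epsilon E(x_d)$, where $E$ is a matrix polynomial with $\max_{x_d\in\Omega}\|E(x_d)\|_2 \le 1$. Since $x_d^{\ast}$ is a simple finite eigenvalue of the regular pencil $R$, the denominator $w^T R'(x_d^{\ast}) v$ is nonzero (this is the standard characterization of simplicity of an eigenvalue of a regular matrix polynomial, cf.\ Gohberg--Lancaster--Rodman), and the implicit function theorem applied to $\det(\widehat{R}(\hat{x}_d))=0$ guarantees that there is a perturbed eigenvalue admitting an expansion $\hat{x}_d = x_d^{\ast} + \epsilon \delta + O(\epsilon^2)$ with a corresponding eigenvector $\hat{v} = v + \epsilon \, \delta v + O(\epsilon^2)$.

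Next I would substitute these expansions into $\widehat{R}(\hat{x}_d)\hat{v} = 0$, Taylor-expand $R$ about $x_d^{\ast}$, and collect the $O(\epsilon)$ terms to obtain
\[
\delta\, R'(x_d^{\ast}) v + R(x_d^{\ast})\, \delta v + E(x_d^{\ast}) v = 0.
\]
Multiplying on the left by $w^T$ and using the left-eigenvector identity $w^T R(x_d^{\ast}) = 0$ eliminates the unknown $\delta v$ and gives the explicit first-order formula $\delta = -\,w^T E(x_d^{\ast}) v\,/\,w^T R'(x_d^{\ast}) v$. Taking absolute values and applying the sub-multiplicative bound $|w^T E(x_d^{\ast}) v| \le \|w\|_2 \|E(x_d^{\ast})\|_2 \|v\|_2 \le \|w\|_2 \|v\|_2$ yields the upper inequality $\kappa(x_d^{\ast},R) \le \|v\|_2\|w\|_2/|w^T R'(x_d^{\ast}) v|$.

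For the matching lower bound I would exhibit a constant (degree $0$) matrix polynomial achieving the supremum: take $E(x_d) \equiv \bar w v^T/(\|w\|_2\|v\|_2)$. A direct computation gives $\|E\|_2 = 1$ and $w^T E v = \|w\|_2\|v\|_2$, so this $E$ is admissible on every $\Omega$ and saturates the Cauchy--Schwarz step above; assuming $x_d^{\ast} \in \Omega$ (which is the situation of interest, since otherwise the root is outside the region of interest) this shows the bound is attained in the limit $\epsilon\to 0^+$.

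The main, and only, nonroutine point is the justification that a simple finite eigenvalue of a regular matrix polynomial varies smoothly under perturbation of its coefficients so that the formal expansion of $\hat{x}_d$ is legitimate; I would cite the classical perturbation theory for regular matrix polynomials (Gohberg--Lancaster--Rodman, and Tisseur's condition number analysis) to dispose of this, since the condition $w^T R'(x_d^{\ast}) v \ne 0$ characterising simplicity ensures the implicit function theorem applies at $(x_d^{\ast},0)$. Everything else is bookkeeping.
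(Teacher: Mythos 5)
Your proof is correct and follows essentially the same route as the paper: first-order expansion of $\widehat{R}(\hat{x}_d)\hat{v}=0$, left-multiplication by $w^T$ to eliminate the unknown eigenvector correction, Cauchy--Schwarz for the upper bound, and a rank-one constant perturbation to show attainment. The only quibble is that for complex $v$ the saturating perturbation should be $\bar{w}\,\bar{v}^{T}/(\|w\|_2\|v\|_2)$ rather than $\bar{w}\,v^{T}/(\|w\|_2\|v\|_2)$ so that $w^{T}E(x_d^{\ast})v=\|w\|_2\|v\|_2$ exactly; the paper's own choice contains the same harmless slip.
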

\begin{proof} 
The first part of the proof follows the analysis in~\cite{Tisseur_00_01}.
Let $R(x_d)$ be a regular matrix polynomial with a simple eigenvalue $x_d^{\ast}\in\mathbb{C}$
and corresponding right and left eigenvectors $v,w\in\mathbb{C}^{N\times 1}$. %Define 
%$\psi(x_d) = w^TR(x_d)v$ and let $\Delta R(x_d)$ be a perturbation such that 
%$\max_{x_d\in\Omega} \|\Delta R(x_d)\|_2\leq \epsilon$. Suppose that 
%$\hat{x}_d$ satisfies $w^T(R(\hat{x}_d) + \Delta R(\hat{x}_d))v = 0$. Then, 
%by a Taylor series expansion about $x_d^{\ast}$ we have 
%\[
% -w^T\Delta R(\hat{x}_d)v = \psi(\hat{x}_d) = \psi(x_d^{\ast}) + (\hat{x}_d - %x_d^{\ast})\psi'(x_d^{\ast}) + \mathcal{O}(\|\hat{x}_d - x_d^{\ast}\|_2^2). 
%\]
A perturbed matrix polynomial $\hat R(x) = R(x) + \Delta R(x)$ will have a perturbed 
eigenvalue $\hat x_d$ and a perturbed eigenvector $\hat v = v + \delta v$
such that $R(\hat x_d) \hat v + \Delta R(\hat x_d) \hat v=0$, where $\|\Delta R(x)\|_2\leq \epsilon$. 

Expanding, keeping only the first order terms, and using $R(x_d^\ast)v=0$ we obtain
\[ 
(\hat x_d - x_d^\ast) R'(x_d^\ast)  v + R(x_d^\ast) \delta v + \Delta R(x_d^\ast) v = \mathcal{O}(\epsilon^2). 
\]
Multiplying by $w^T$ on the left, rearranging, and keeping the first order terms, we obtain
%By noting that $\psi(x_d^{\ast})=0$, taking only the first-order leading term, 
%and rearranging we obtain
\[
  \hat{x}_d = x_d^{\ast} - \frac{w^T\Delta R(x_d^{\ast})v}{w^TR'(x_d^{\ast})v},
\]
where the derivative in $R'(x_d^{\ast})$ is taken with respect to $x_d$.
Thus, from~\eqref{eq:condbezform} we see that 
\begin{equation}
 \kappa(x_d^{\ast},R) \leq \frac{\|v\|_2\|w\|_2}{|w^TR'(x_d^{\ast})v|}.
\label{eq:condNum}
\end{equation}
We now show that the upper bound in~\eqref{eq:condNum} can be attained. Take $\Delta R(x_d) = \epsilon w v^T/(\|v\|_2\|w\|_2)$. Then, $\max_{x_d\in\Omega} \|\Delta R(x_d)\|_2= \epsilon$ and 
\[
 \frac{w^T\Delta R(x_d^{\ast})v}{w^TR'(x_d^{\ast})v} = \epsilon\frac{\|v\|_2\|w\|_2}{w^TR'(x_d^{\ast})v}.
\]
The result follows by Definition~\ref{def:condEigenvalue}.
\end{proof}

For the Cayley resultant matrix (see Section~\ref{sec:Cayley}), we will 
show that $\kappa_2(x_d^*,R)$ can be as large as 
$\|J(\underline{x}^{\ast})^{-1}\|_2^{d}$ (see Theorem~\ref{thm:condTheorem}). 
Thus, there can be an exponential increase in the conditioning
that seems inherent to the methodology of the hidden variable resultant 
method based on the Cayley resultant matrix. 
In particular, once
the polynomial eigenvalue problem has been constructed, a backward stable 
numerical eigensolver may not compute accurate solutions to~\eqref{eq:polynomialSystem}. 

We now must tackle the significant 
challenge of showing that the Cayley and Sylvester resultant matrices do lead 
to numerical unstable hidden variable resultant methods, i.e., for certain 
solutions $\underline{x}^{\ast}$ the quantity $\kappa_2(x_d^*,R)$ can be much 
larger than $\|J(\underline{x}^{\ast})^{-1}\|_2$.

\section{The Cayley resultant is numerically unstable for multidimensional rootfinding}\label{sec:Cayley}
The hidden variable resultant method when based on the Cayley resultant~\cite{Cayley_1848_01} 
finds the solutions to~\eqref{eq:polynomialSystem} by solving the polynomial eigenvalue problem given by
$R_{Cayley}(x_d)v = 0$, where $R_{Cayley}(x_d)$ is a certain matrix polynomial. 
To define it we follow the exposition in~\cite{Chionh_02_01} and first introduce a related Cayley 
function $f_{Cayley}$.
\begin{definition}[Cayley function]
The Cayley function associated with the polynomials $q_1,\ldots,q_d\in\mathbb{C}_{n}[x_1,\ldots,x_{d-1}]$ is a multivariate 
polynomial in $2d-2$ variables, denoted by $f_{Cayley} = f_{Cayley}(q_1,\ldots,q_d)$, 
and is given by
\begin{equation}
f_{Cayley}  
= {\rm det}
\begin{pmatrix} 
q_1(s_1,s_2,\ldots,s_{d-1}) & \ldots & q_d(s_1,s_2,\ldots,s_{d-1})\\[3pt] 
q_1(t_1,s_2,\ldots,s_{d-1}) & \ldots & q_d(t_1,s_2,\ldots,s_{d-1})\\[3pt] 
\vdots & \ddots & \vdots\\[3pt]
q_1(t_1,t_2,\ldots,t_{d-1}) & \ldots & q_d(t_1,t_2,\ldots,t_{d-1})\\[3pt]  
\end{pmatrix}\Bigg/\prod_{i=1}^{d-1} (s_i - t_i).
\label{eq:dixonResultant}
\end{equation} 
\end{definition}

In two dimensions the Cayley function (also known as the B\'{e}zoutian function~\cite{Nakatsukasa_13_01}) takes the 
more familiar form of 
\[
 f_{Cayley} = \frac{1}{s_1-t_1}\det\begin{pmatrix}q_1(s_1) & q_2(s_1)\\[3pt] q_1(t_1) &q_2(t_1)\end{pmatrix} = \frac{q_1(s_1)q_2(t_1)-q_2(s_1)q_1(t_1)}{s_1-t_1}, 
\]
which is of degree at most $n-1$ in $s_1$ and $t_1$.  By carefully applying 
Laplace's formula for the matrix determinant in~\eqref{eq:dixonResultant}, one can see that $f_{Cayley}$ 
is a polynomial of degree $\tau_k \leq kn-1$ in $s_k$ and $t_{d-k}$ for $1\leq k\leq d-1$.  

Note that $f_{Cayley}$ is not the multidimensional resultant (except when $\tau_k = 0$ for all $k$). Instead, $f_{Cayley}$ is a function that is a convenient way to define the Cayley resultant matrix.  

Let $\{\phi_0,\phi_1,\ldots,\}$ be the selected degree-graded polynomial basis. 
The Cayley resultant matrix depends on the polynomial basis and is 
related to the expansion coefficients of 
$f_{Cayley}$ in a tensor-product basis of 
$\{\phi_0,\phi_1,\ldots,\}$. That is, let
\begin{equation} 
 f_{Cayley} = \sum_{i_1=0}^{\tau_1}\!\cdots\!\sum_{i_{d-1}=0}^{\tau_{d-1}}\sum_{j_1=0}^{\tau_{d-1}}\!\cdots\!\sum_{j_{d-1}=0}^{\tau_1} A_{i_1,\ldots,i_{d-1},j_1,\ldots,j_{d-1}}\prod_{k=1}^{d-1}\phi_{i_k}(s_k) \prod_{k=1}^{d-1}\phi_{j_k}(t_k)
\label{eq:expansion} 
\end{equation}
be the tensor-product expansion of the polynomial $f_{Cayley}$, where 
$A$ is a tensor of expansion coefficients of size $(\tau_1+1)\times \cdots \times (\tau_{d-1}+1)\times (\tau_{d-1}+1) \times\cdots \times (\tau_1+1)$. 
The {\em Cayley resultant matrix} is the following unfolding (or matricization) of $A$~\cite[Sec.~2.3]{Ragnarsson_12_01}: 
\begin{definition}[Cayley resultant matrix]
The Cayley resultant matrix associated with $q_1,\ldots,q_d\in\mathbb{C}_n[x_1,\ldots,x_{d-1}]$ with respect to the basis $\{\phi_0,\phi_1,\ldots,\}$ is 
denoted by $R_{Cayley}$ and is the $\left(\prod_{k=1}^{d-1} (\tau_k+1)\right)\times\left(\prod_{k=1}^{d-1} (\tau_k+1)\right)$
matrix formed by the unfolding of the tensor $A$ in~\eqref{eq:expansion}. This unfolding is often denoted
by $A_{\textbf{r}\times\textbf{c}}$, where $\textbf{r} = \{1,\ldots,d-1\}$ and $\textbf{c}=\{d,\ldots,2d-2\}${\em ~\cite[Sec.~2.3]{Ragnarsson_12_01}}.
\end{definition}

For example, when $\tau_k = kn-1$ for $1\leq k\leq d-1$ we have 
for $0\leq i_k,j_{d-k}\leq kn-1$
\[
R_{Cayley}\left(\sum_{k=1}^{d-1} (k-1)!i_kn^{k-1}, \sum_{k=1}^{d-1} j_{d-k}\frac{(d-1)!}{(d-k)!}n^{k-1}\right) = A_{i_1,\ldots,i_{d-1},j_1,\ldots,j_{d-1}}.
\]
This is equivalent to 
{\tt N = factorial(d-1)*n\string^(d-1); R = reshape(A, N, N);} 
in MATLAB, except here the indexing of the matrix 
$R_{Cayley}$ starts at $0$.

For rootfinding, we set 
$q_1 = p_1[x_d],\ldots,q_d=p_d[x_d]$ (thinking of $x_d$ as the ``hidden'' variable). 
Then, $R_{Cayley} = R_{Cayley}(x_d)$ is a square matrix polynomial (see Section~\ref{sec:matrixpoly}). 
If all the polynomials are of maximal degree $n$, then $R_{Cayley}$ is of size 
$(d-1)!n^{d-1}$ and of degree at most $dn$.  
The fact that $(d-1)!n^{d-1} \times dn = d!n^d$ is the maximum number of 
possible solutions that~\eqref{eq:polynomialSystem} can possess (see Lemma~\ref{lem:MaxSolutions}) is a consequence of $R_{Cayley}$
being a resultant matrix. In particular, the eigenvalues of $R_{Cayley}(x_d)$ 
are the $d$th components of the solutions to~\eqref{eq:polynomialSystem} 
and the remaining $d-1$ components of the solutions can in principle be obtained 
from the eigenvectors.

It turns out that evaluating $f_{Cayley}$ at $t_1^{\ast},\ldots,t_{d-1}^{\ast}$
is equivalent to a matrix-vector product with $R_{Cayley}$. This relationship between $R_{Cayley}$ and $f_{Cayley}$ will be 
essential in Section~\ref{subsec:CayleyEigenvectorStructure} for understanding
the eigenvectors of $R_{Cayley}$. 
\begin{lemma} 
Let $d\geq 2$, $\underline{t}^{\ast}\in\mathbb{C}^{d-1}$, and $f_{Cayley}$ and $R_{Cayley}$
be the Cayley function and matrix associated with 
$q_1,\ldots,q_d\in\mathbb{C}_{n}[x_1,\ldots,x_{d-1}]$, respectively. 
If $V$ is the tensor satisfying   
$V_{j_1,\ldots,j_{d-1}} = \prod_{k=1}^{d-1} \phi_{j_k}(t_k^{\ast})$ for $0\leq j_{d-k} \leq \tau_k$, then
we have
\[
\begin{aligned} 
  R_{Cayley} {\rm vec}(V) &= {\rm vec}(Y),
\end{aligned} 
\]
where $Y$ is the tensor that satisfies
\[
 f_{Cayley}(s_1,\ldots,s_{d-1},t_1^{\ast},\ldots,t_{d-1}^{\ast}) = \sum_{i_1=0}^{\tau_1}\cdots\sum_{i_{d-1}=0}^{\tau_{d-1}}Y_{i_1,\ldots,i_{d-1}}\prod_{k=1}^{d-1}\phi_{i_k}(s_k).
\]
\label{lem:rightCayleyVectors}
\end{lemma}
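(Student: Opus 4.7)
The plan is to unwind the two definitions in play, namely the tensor–product expansion of $f_{Cayley}$ in~\eqref{eq:expansion} and the unfolding that defines $R_{Cayley}$, and to verify that the matrix–vector product $R_{Cayley}\,{\rm vec}(V)$ carries out exactly the ``partial evaluation'' of $f_{Cayley}$ at $t_k = t_k^{\ast}$.

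First I would substitute $t_k = t_k^{\ast}$ directly into~\eqref{eq:expansion}. Since the right-hand side factors into $(\prod_k \phi_{i_k}(s_k))\cdot(\prod_k\phi_{j_k}(t_k))$, after substitution the inner sum over the $j$--indices produces, for each multi-index $(i_1,\ldots,i_{d-1})$, a single scalar coefficient of $\prod_k\phi_{i_k}(s_k)$. Using the tensor $V$ from the statement, this coefficient is exactly
\[
\sum_{j_1=0}^{\tau_{d-1}}\cdots\sum_{j_{d-1}=0}^{\tau_1}
A_{i_1,\ldots,i_{d-1},j_1,\ldots,j_{d-1}}\,V_{j_1,\ldots,j_{d-1}}.
\]
On the other hand, the polynomial $f_{Cayley}(s_1,\ldots,s_{d-1},t_1^{\ast},\ldots,t_{d-1}^{\ast})$, whose expansion in $\prod_k\phi_{i_k}(s_k)$ defines the tensor $Y$, has coefficient $Y_{i_1,\ldots,i_{d-1}}$ in front of $\prod_k\phi_{i_k}(s_k)$. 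Because a polynomial's expansion in a degree-graded tensor-product basis is unique, I can equate the two coefficients to obtain $Y_{i_1,\ldots,i_{d-1}} = \sum_{j} A_{i_1,\ldots,i_{d-1},j_1,\ldots,j_{d-1}} V_{j_1,\ldots,j_{d-1}}$.

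Next I would translate this tensor identity into the promised matrix–vector identity. By the definition of the $(\{1,\ldots,d-1\}\times\{d,\ldots,2d-2\})$–unfolding, $R_{Cayley}$ is precisely the matrix whose rows are indexed by $(i_1,\ldots,i_{d-1})$, whose columns are indexed by $(j_1,\ldots,j_{d-1})$, and whose entry in position $((i_1,\ldots,i_{d-1}),(j_1,\ldots,j_{d-1}))$ equals $A_{i_1,\ldots,i_{d-1},j_1,\ldots,j_{d-1}}$. The vectorization ${\rm vec}(V)$ lists the entries $V_{j_1,\ldots,j_{d-1}}$ in the same column order, and ${\rm vec}(Y)$ lists the $Y_{i_1,\ldots,i_{d-1}}$ in the same row order, so the identity above is literally the definition of the product $R_{Cayley}\,{\rm vec}(V) = {\rm vec}(Y)$.

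The only delicate point is to ensure that the orderings used by ${\rm vec}(\cdot)$ for $V$ and $Y$ are consistent with the row and column orderings of the unfolding; this is purely a bookkeeping exercise and is taken care of once one adopts any fixed linear order on the multi-indices (say the lexicographic order implicit in the explicit index formula displayed in the excerpt after the definition of $R_{Cayley}$). I expect no deeper obstacle: the lemma is a direct consequence of uniqueness of expansion in a degree-graded tensor-product basis together with the definition of the matricization.
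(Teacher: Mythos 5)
Your proposal is correct and follows essentially the same route as the paper's proof, which likewise writes the matrix--vector product componentwise as $\sum_{j} A_{i_1,\ldots,i_{d-1},j_1,\ldots,j_{d-1}}\prod_k\phi_{j_k}(t_k^{\ast}) = Y_{i_1,\ldots,i_{d-1}}$ and invokes the expansion~\eqref{eq:expansion}. Your additional remarks on uniqueness of the basis expansion and the consistency of the vectorization orderings merely make explicit what the paper leaves implicit.
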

\begin{proof} 
The matrix-vector product $R_{Cayley} {\rm vec}(V) = {\rm vec}(Y)$ is equivalent to the 
following sums: 
\[
\sum_{j_1=0}^{\tau_{d-1}}\cdots\sum_{j_{d-1}=0}^{\tau_1} A_{i_1,\ldots,i_{d-1},j_1,\ldots,j_{d-1}}\prod_{k=1}^{d-1}\phi_{j_k}(t_k^{\ast}) = Y_{i_1,\ldots,i_{d-1}}
\]
for some tensor $Y$. The result follows from~\eqref{eq:expansion}. 
\end{proof}

\subsection{The Cayley resultant as a generalization of Cramer's rule}
In this section we show that for systems of linear 
polynomials, i.e., of total degree $1$, the Cayley 
resultant is precisely Cramer's rule. We believe this connection is folklore, 
but we have been unable to find an existing 
reference that provides a rigorous justification. It gives a first hint 
that the hidden variable resultant method in full generality may be 
numerically unstable.
\begin{theorem}
Let $A$ be a matrix of size $d\times d$, $\underline{x} = (x_1,\ldots,x_d)^T$, and 
$\underline{b}$ a vector of size $d\times 1$. Then, solving the linear polynomial system 
$A \underline{x} + \underline{b} = 0$ by the hidden variable resultant method based on the 
Cayley resultant is equivalent to Cramer's rule for calculating $x_d$.
\label{thm:CayleyCramer}
\end{theorem}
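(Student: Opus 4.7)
The plan is to compute the Cayley function $f_{Cayley}$ explicitly by elementary row operations on the $d\times d$ determinant in~\eqref{eq:dixonResultant}, observe that in the affine case the associated Cayley resultant matrix collapses to a $1\times 1$ scalar polynomial in $x_d$, and then recognize that equation as Cramer's rule.

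Writing the hidden polynomials as $q_k(x_1,\ldots,x_{d-1}) = \sum_{j=1}^{d-1} A_{k,j}\, x_j + (A_{k,d} x_d + b_k)$, the key observation is that, because each $q_k$ is affine, adjacent rows of the matrix in~\eqref{eq:dixonResultant} differ in only a single argument; subtracting row $i+1$ from row $i$ produces $\bigl(A_{k,i}(s_i - t_i)\bigr)_{k=1}^{d}$. Performing these subtractions in increasing order of $i$ (so that each step uses only an unmodified row) reduces the top $d-1$ rows to $(A_{k,i}(s_i - t_i))_k$ and leaves the bottom row equal to $(q_k(t_1,\ldots,t_{d-1}))_k$. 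Pulling the factor $\prod_{i=1}^{d-1}(s_i-t_i)$ out of the determinant exactly cancels the denominator of $f_{Cayley}$, and one further sweep, subtracting $t_i$ times the new row $i$ from the bottom row for $i=1,\ldots,d-1$, simplifies the bottom row to $(A_{k,d} x_d + b_k)_k$. The outcome is independent of $s$ and $t$; hence $\tau_k = 0$ for every $k$, and $R_{Cayley}(x_d)$ is a $1\times 1$ scalar polynomial of degree $1$ in $x_d$, equal to the determinant of a $d\times d$ matrix whose first $d-1$ rows are $(A_{k,i})_k$ and whose last row is $(A_{k,d} x_d + b_k)_k$.

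To finish, I would apply multilinearity of the determinant in its final row: the coefficient of $x_d$ is the determinant of $A^T$, hence equals $\det A$, while the constant term is the determinant of $A^T$ with its last row replaced by $b^T$, which by transposing equals the determinant of the matrix $A^b_d$ obtained from $A$ by replacing its last column with $b$. Solving $R_{Cayley}(x_d) = 0$ therefore yields $x_d = -\det(A^b_d)/\det A$, exactly Cramer's rule applied to the system $A\underline{x} = -\underline{b}$. There is no deep obstacle here; the main care is in the bookkeeping of the row-operation order and in the transposition step at the end, both of which are routine.
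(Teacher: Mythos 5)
Your argument is correct, and it arrives at the key identity $f_{Cayley} = \det(B) + x_d\det(A)$ (with $B$ equal to $A$ with its last column replaced by $\underline{b}$) by a genuinely different route from the paper. The paper factors the numerator matrix of~\eqref{eq:dixonResultant} as $M = BV + x_d A_d e^T$, where $V$ is the matrix whose columns are the homogenized evaluation points, and then applies the matrix determinant lemma twice together with the adjugate identities $\adj(BV)=\adj(V)\adj(B)$ and $e^T\adj(V)=\det(V)e_d^T$. You instead work directly on the determinant by elementary row operations: consecutive rows differ in a single argument of the affine $q_k$, so their differences factor as $A_{k,i}(s_i-t_i)$; performing the subtractions in increasing order of $i$ keeps each step acting on unmodified rows, the factor $\prod_{i=1}^{d-1}(s_i-t_i)$ cancels the denominator exactly, and a final sweep plus multilinearity in the last row gives $x_d\det(A^T)$ plus the determinant of $A^T$ with last row $b^T$, which transposes to $\det(B)$. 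Your version is more elementary (no adjugates) and makes explicit \emph{why} $f_{Cayley}$ has degree zero in every $s_i$ and $t_i$ --- a fact the paper asserts parenthetically without derivation --- while the paper's factorization is more compact and avoids the bookkeeping about the order of the row operations. Both arguments then conclude identically: since $\tau_k=0$ for all $k$, the resultant matrix is the $1\times1$ scalar $\det(A)\,x_d+\det(B)$, whose unique root is Cramer's formula for $x_d$ in $A\underline{x}=-\underline{b}$.
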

\begin{proof}
Let $A_d$ be the last column of $A$ and $B = A - A_d e_d^T + \underline{b} e_d^T$, where $e_d$ is 
the $d$th canonical vector. Recall that Cramer's rule computes the entry $x_d$ 
in $A \underline{x} = -\underline{b}$ via the formula
$x_d=-\det(B)/\det(A)$. We will show that for the linear polynomial system 
$A \underline{x} + \underline{b} = 0$ we have $f_{Cayley} = \det(B) + x_d\det(A)$. Observe that this, in particular, implies that (since $f_{Cayley}$ has degree $0$ in $s_i,t_i$ for all $i$) $f_{Cayley}=R_{Cayley}=\mathcal{R}_{Cayley}$. Hence, 
the equivalence between Cramer's rule and rootfinding based on the Cayley resultant.

First, using~\eqref{eq:dixonResultant}, we write 
$f_{Cayley} = \det(M)/\det(V)$ where the matrices $M$ and $V$ are
\[ 
V=\begin{bmatrix}
s_1 & t_1  & t_1 & \dots & t_1\\[3pt]
s_2 & s_2 & t_2 & \dots & t_2\\[3pt]
\vdots & \vdots & \vdots & \ddots & \vdots\\[3pt]
s_{d-1} & s_{d-1} & s_{d-1} & \dots & t_{d-1} \\[3pt]
1 & 1 & 1 & \dots & 1
\end{bmatrix}, \qquad   M = B V + x_d A_d e^T,
\]
where $e$ is the $d\times 1$ vector of all ones. (It can be shown by 
induction on $d$ that $\det(V) = \prod_{i=1}^{d-1} (s_i - t_i)$, as required.) 
Using the matrix determinant lemma, we have
\[ 
\det(M) = \det(B) \det(V) + x_d e^T \adj(BV) A_d,
\]
where $\adj(BV)$ is the algebraic adjugate matrix of $BV$. 
Now, recall that $\adj(BV)=\adj(V)\adj(B)$ and observe that
$e^T \adj(V) = \det(V) e_d^T$. Hence, we obtain
\[ 
\frac{\det(M)}{\det(V)} = \det(B)  + x_d (e_d^T \adj(B) A_d). 
\]
Using $e_d^T \adj(B)  \underline{b}= \det(B)$ and the matrix determinant lemma 
one more time, we conclude that
\[ 
\det(A) = \det(B) + e_d^T \adj(B) A_d - e_d^T \adj(B) \underline{b} = e_d^T \adj(B) A_d.
\]
Thus, $f_{Cayley} = \det(B) + x_d\det(A)$ and the resultant method calculates 
$x_d$ via Cramer's formula. 
\end{proof}
 
It is well-known in the literature that Cramer's rule is a numerically unstable
algorithm for solving $A\underline{x} = \underline{b}$~\cite[Sec.~1.10.1]{Higham_02_01}. 
Thus, Theorem~\ref{thm:CayleyCramer} casts significant suspicion on 
the numerical properties of the hidden variable resultant method based on the 
Cayley resultant. 

\subsection{The eigenvector structure of the Cayley resultant matrix}\label{subsec:CayleyEigenvectorStructure}
Ultimately, we wish to use Lemma~\ref{lem:conditionExpression} to estimate
the condition number of the eigenvalues of the Cayley resultant matrix.
To do this we need to know the left and right eigenvectors of $R_{Cayley}$. 
The following lemma shows that the eigenvectors of $R_{Cayley}$ are in 
Vandermonde form\footnote{In one dimension we say that an $N\times 1$ vector $v$ is in Vandermonde form if there is an $x\in\mathbb{C}$ 
such that $v_i = \phi_i(x)$ for $0\leq i\leq N-1$. In higher dimensions, the vector 
${\rm vec}(A)$ is 
in Vandermonde form if $A_{i_1,\ldots,i_d} = \prod_{k=1}^d \phi_{i_k}(x_k)$ for some $x_1,\ldots,x_d\in\mathbb{C}$.}.  To show this we exploit the convenient relationship between 
evaluation of $f_{Cayley}$ and matrix-vector products with $R_{Cayley}$. 
\begin{lemma} 
 Suppose that $\underline{x}^{\ast}=(x^{\ast}_1,\ldots,x^{\ast}_d)\in\mathbb{C}^d$ is a simple root of~\eqref{eq:polynomialSystem}. 
 Let $V$ and $W$ be tensors of size $(\tau_{d-1}+1) \times\cdots \times (\tau_1+1)$ and 
 $(\tau_1+1)\times\cdots \times (\tau_{d-1}+1)$, respectively, defined by 
  \[
  V_{j_1,\ldots,j_{d-1}} = \prod_{k=1}^{d-1} \phi_{j_k}(x_k^{\ast}), \qquad 0\leq j_{k} \leq \tau_{d-k}
 \] 
 and
   \[
  W_{i_1,\ldots,i_{d-1}} = \prod_{k=1}^{d-1} \phi_{i_k}(x_k^{\ast}), \qquad 0\leq i_{k} \leq \tau_k.
 \]
 Then, the vectors ${\rm vec}(V)$ and ${\rm vec}(W)$ are the right and left 
 eigenvectors of the matrix $R_{Cayley}(p_1[x_d^{\ast}],\ldots,p_d[x_d^{\ast}])$ 
 that correspond to the eigenvalue $x_d^{\ast}$.
 \label{lem:CayleyEigenvectorStructure}
\end{lemma}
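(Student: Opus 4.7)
My strategy is to use Lemma~\ref{lem:rightCayleyVectors} to reduce each eigenvector equation to the vanishing of $f_{Cayley}$ under a suitable specialization at the coordinates of $\underline{x}^{\ast}$. For the right eigenvector, I apply Lemma~\ref{lem:rightCayleyVectors} with $q_k = p_k[x_d^{\ast}]$ and $t_k^{\ast} = x_k^{\ast}$ for $k = 1,\ldots,d-1$, which yields
\[
R_{Cayley}(p_1[x_d^{\ast}], \ldots, p_d[x_d^{\ast}])\,{\rm vec}(V) = {\rm vec}(Y),
\]
where $Y$ collects the tensor-product-$\phi$-basis coefficients in $s$ of the polynomial obtained from $f_{Cayley}$ by setting $t_k = x_k^{\ast}$. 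Thus the right eigenvector claim reduces to showing that this specialization produces the zero polynomial in $s$. For the left eigenvector, a direct reading of~\eqref{eq:expansion} shows that the entry of $({\rm vec}(W))^T R_{Cayley}$ indexed by $(j_1,\ldots,j_{d-1})$ equals the coefficient of $\prod_k \phi_{j_k}(t_k)$ in the specialization of $f_{Cayley}$ at $s_k = x_k^{\ast}$; this is the transposed analog of Lemma~\ref{lem:rightCayleyVectors}, and the left eigenvector claim reduces symmetrically to the vanishing of this specialization as a polynomial in $t$.

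The key computation treats the defining identity
\[
\prod_{i=1}^{d-1}(s_i - t_i)\,f_{Cayley} = \det M(s,t)
\]
as an equality in $\mathbb{C}[s_1,\ldots,s_{d-1},t_1,\ldots,t_{d-1}]$, where $M(s,t)$ is the matrix in~\eqref{eq:dixonResultant} with $q_k = p_k[x_d^{\ast}]$. Substituting $t_k = x_k^{\ast}$ makes the last row of $M$ equal to $(p_1(\underline{x}^{\ast}), \ldots, p_d(\underline{x}^{\ast})) = 0$, so $\det M$ vanishes identically as a polynomial in $s$. Because $\prod_{i=1}^{d-1}(s_i - x_i^{\ast})$ is a nonzero element of $\mathbb{C}[s_1,\ldots,s_{d-1}]$ and hence not a zero divisor, the specialized $f_{Cayley}$ must itself be the zero polynomial in $s$. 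An entirely symmetric argument with $s$ and $t$ swapped handles the left eigenvector: setting $s_k = x_k^{\ast}$ makes the first row of $M$ vanish and forces $f_{Cayley}|_{s = \underline{x}^{\ast}} \equiv 0$.

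The only mildly delicate step is to verify that specialization commutes with the formal division by $\prod_i(s_i - t_i)$; this is handled cleanly by reading the identity as a polynomial identity in $\mathbb{C}[s,t]$ and using that $f_{Cayley}$ is already known to be polynomial, together with the non--zero--divisor remark. I do not anticipate any deeper obstacle: once the bridge provided by Lemma~\ref{lem:rightCayleyVectors} is in place, the proof is essentially the single observation that the numerator matrix $M$ has a full row of zeros whenever $(t_1,\ldots,t_{d-1})$ or $(s_1,\ldots,s_{d-1})$ is set equal to the first $d-1$ coordinates of the root $\underline{x}^{\ast}$.
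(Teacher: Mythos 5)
Your proof is correct and takes essentially the same route as the paper's: both reduce the eigenvector equations to the vanishing of $f_{Cayley}$ under the specializations $t_k = x_k^{\ast}$ (right eigenvector) and $s_k = x_k^{\ast}$ (left eigenvector) via Lemma~\ref{lem:rightCayleyVectors} and its transposed analog, and both conclude from the vanishing of the last (resp.\ first) row of the determinant in~\eqref{eq:dixonResultant} together with the fact that $\{\phi_k\}$ is a basis. Your explicit non-zero-divisor argument for why the vanishing of the numerator forces the vanishing of the quotient $f_{Cayley}$ is a small point the paper leaves implicit, but it is not a difference in approach.
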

\begin{proof}
 Let $f_{Cayley} = f_{Cayley} (p_1[x_d^{\ast}],\ldots,p_d[x_d^{\ast}])$ be the 
 Cayley function associated with $p_1[x_d^{\ast}],\ldots,p_d[x_d^{\ast}]$. 
 From~\eqref{eq:dixonResultant} we find that $f_{Cayley}(s_1,\ldots,s_{d-1},x_1^{\ast},\ldots,x_{d-1}^{\ast})=0$
 because the determinant of a matrix with a vanishing last row is zero. Moreover, 
 by Lemma~\ref{lem:rightCayleyVectors} we have
 \[
 0=f_{Cayley}(s_1,\ldots,s_{d-1},x_1^{\ast},\ldots,x_{d-1}^{\ast})= \sum_{i_1=0}^{\tau_1}\cdots\sum_{i_{d-1}=0}^{\tau_{d-1}} Y_{i_1,\ldots,i_{d-1}}\prod_{k=1}^{d-1} \phi_{i_k}(s_k).  
 \]
 Since $\{\phi_0,\phi_1,\ldots,\}$ is a polynomial basis we must conclude that 
 $Y = 0$, and hence, $R_{Cayley}(x_d^{\ast}) v = 0$ with $v = {\rm vec}(V)$. 
 In other words, $v$ is a right eigenvector of $R_{Cayley}$ corresponding 
 to the eigenvalue $x_d^{\ast}$ (see Definition~\ref{def:MatpolyEigenvector}). 
 
 An analogous derivation shows that ${\rm vec}(W)$ is a 
 left eigenvector of $R_{Cayley}$. 
\end{proof}

\subsection{On the generalized Rayleigh quotient of the Cayley resultant matrix}
To bound $\kappa(x_d^{\ast},R_{Cayley})$ we need to 
bound the absolute value of the generalized Rayleigh quotient of $R_{Cayley}'(x_d)$ (see~Lemma~\ref{lem:conditionExpression}), 
whenever $\underline{x}^{\ast}\in\mathbb{C}^d$ is such that $x_d^{\ast}$ is a 
simple eigenvalue of $R_{Cayley}(x_d)$, i.e., there are no other solutions to~\eqref{eq:polynomialSystem} with the same $d$th component.  
In a similar style to the proof of Lemma~\ref{lem:CayleyEigenvectorStructure} we show
this by exploiting the relation between evaluating the derivative of 
$f_{Cayley}$ and matrix-vector products with $R_{Cayley}'(x_d)$.
\begin{theorem} 
Let $p_1,\dots,p_d$ be the polynomials in~\eqref{eq:polynomialSystem}, 
$\underline{x}^{\ast}\in\mathbb{C}^{d}$ a solution 
of~\eqref{eq:polynomialSystem}, and $f_{Cayley}(x_d)$ the Cayley function
associated with $q_1=p_1[x_d],\dots,q_d=p_d[x_d]$. We have
\[
f_{Cayley}'(x_d^{\ast}) \Big|_{\genfrac{}{}{0pt}{}{s_k = t_k = x_k^{\ast}}{1\leq k\leq d-1}} = {\rm det}(J(x_d^{\ast})),
\]
where $J(\underline{x}^{\ast})$ is the Jacobian matrix in~\eqref{eq:Jacobian}. That is, 
$f_{Cayley}'(x_d^{\ast})$ evaluated at $s_k = t_k = x_k^{\ast}$ for $1\leq k\leq d-1$ is 
equal to the determinant of the Jacobian.
\label{thm:jacobianExpression}
\end{theorem}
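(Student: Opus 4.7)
The plan is to rewrite $f_{Cayley}$ as a single determinant without a cancelling denominator, then to exploit the fact that $p_k(\underline{x}^{\ast})=0$ for all $k$ so that only one term survives upon differentiation with respect to $x_d$. First, I would perform $d-1$ elementary row operations on the matrix $M$ appearing in~\eqref{eq:dixonResultant}: sequentially, for $i=1,\ldots,d-1$, replace row $i$ by (row $i$) $-$ (row $i+1$). Rows $i$ and $i+1$ of $M$ share all of their arguments except in position $i$ (where they hold $s_i$ and $t_i$ respectively), so every entry of the new row $i$ is a first-order divided difference of some $q_k$ in its $i$-th variable, hence polynomially divisible by $s_i-t_i$. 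Factoring these $d-1$ quantities out of the corresponding rows cancels the denominator $\prod_{i=1}^{d-1}(s_i-t_i)$, leaving $f_{Cayley}=\det(\tilde{M})$, where the first $d-1$ rows of $\tilde{M}$ are rowwise divided differences of $(q_1,\ldots,q_d)$ and the last row is $(q_1,\ldots,q_d)$ evaluated at $(t_1,\ldots,t_{d-1})$.

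Next, I set $s_k=t_k=x_k^{\ast}$ for $1\leq k\leq d-1$. A first-order divided difference degenerates to a partial derivative on the diagonal, so row $i$ of $\tilde{M}$ becomes $(\partial q_1/\partial x_i,\ldots,\partial q_d/\partial x_i)$ evaluated at $(x_1^{\ast},\ldots,x_{d-1}^{\ast})$ for $1\leq i\leq d-1$. The last row becomes $(q_1(x_1^{\ast},\ldots,x_{d-1}^{\ast}),\ldots,q_d(x_1^{\ast},\ldots,x_{d-1}^{\ast}))$; once one further specializes $x_d=x_d^{\ast}$, this equals $(p_1(\underline{x}^{\ast}),\ldots,p_d(\underline{x}^{\ast}))=(0,\ldots,0)$ because $\underline{x}^{\ast}$ solves~\eqref{eq:polynomialSystem}.

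The final step is to differentiate $\det(\tilde{M}(x_d))$ with respect to $x_d$ using the standard multilinear rule
\[
\frac{d}{dx_d}\det \tilde{M}(x_d) \;=\; \sum_{r=1}^{d}\det \tilde{M}^{(r)}(x_d),
\]
where $\tilde{M}^{(r)}$ denotes $\tilde{M}$ with its $r$-th row replaced by the derivative of that row with respect to $x_d$. At $x_d=x_d^{\ast}$, every term with $r<d$ still contains the vanishing $d$-th row and therefore contributes zero; only the $r=d$ term survives. Its $d$-th row is $(\partial p_1/\partial x_d,\ldots,\partial p_d/\partial x_d)$ evaluated at $\underline{x}^{\ast}$, so the resulting $d\times d$ matrix is precisely $J(\underline{x}^{\ast})^T$. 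The identity $\det J^T=\det J$ then yields the claim.

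The main obstacle I expect is Step 1, namely a rigorous justification that the row operations and the factorizations of $s_i-t_i$ combine cleanly. Concretely, one must verify that for each $i$ the new row $i$ is divisible by $s_i-t_i$ in the polynomial ring, so that $\tilde{M}$ has polynomial entries. This follows from the elementary polynomial identity $q(a+h,y)-q(a,y)=h\cdot\tilde{q}(a,h,y)$ for a suitable polynomial $\tilde{q}$, applied to each $q_k$ in its $i$-th slot. The degeneration of the divided differences to $\partial/\partial x_i$ at $s_i=t_i$ then links Step 1 seamlessly to Step 2, and the remaining steps are essentially direct computations.
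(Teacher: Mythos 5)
Your proof is correct, and it takes a genuinely different route from the paper's. The paper expands the determinant in~\eqref{eq:dixonResultant} via the Leibniz formula as a signed sum over permutations of products $\prod_i p_{\sigma_i}[x_d](t_1,\dots,t_{i-1},s_i,\dots,s_{d-1})$, observes that the numerator vanishes on the diagonal $s_k=t_k$, and then invokes L'Hospital's rule to identify $f_{Cayley}'(x_d^{\ast})$ on the diagonal with a mixed partial derivative $\partial^{d}/\partial s_1\cdots\partial s_{d-1}\partial x_d$ of the numerator, after which a combinatorial bookkeeping argument (each $p_k$ must be differentiated exactly once, and only factors $i\leq k$ depend on $s_k$) recovers the permutation expansion of $\det J(\underline{x}^{\ast})$. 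You instead clear the denominator at the level of the matrix itself: consecutive rows of $M$ differ in exactly one argument, so subtracting row $i+1$ from row $i$ and factoring $s_i-t_i$ out of each of the first $d-1$ rows exhibits $f_{Cayley}$ as the determinant of a divided-difference matrix $\tilde M$ with polynomial entries; on the diagonal the divided differences degenerate to partial derivatives, the last row vanishes because $\underline{x}^{\ast}$ is a root, and the multilinearity of $\det$ under $d/dx_d$ kills every term except the one producing $J(\underline{x}^{\ast})^T$. Your approach buys a cleaner argument: it sidesteps the multivariable L'Hospital step (which in the paper requires knowing that the numerator and its $x_d$-derivative are divisible by each $s_i-t_i$) and it makes the Jacobian structure visible at the matrix level rather than reassembling it from permutation terms; the paper's permutation expansion, on the other hand, connects more directly to the Leibniz formula for $\det J$. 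The only point you flag as an obstacle --- divisibility of the differenced rows by $s_i-t_i$ --- is indeed the standard one-variable factorization $q(a)-q(b)=(a-b)\tilde q(a,b)$ applied slotwise, and your sequential row operations are harmless since row $i+1$ is still untouched when it is subtracted from row $i$.
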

\begin{proof}
 Recall from~\eqref{eq:dixonResultant} that $f_{Cayley}(x_d)$ is a 
 polynomial in $s_1,\ldots,s_{d-1}$ and $t_1,\ldots,t_{d-1}$ written 
 in terms of a matrix determinant, and set $q_1=p_1[x_d],\ldots,q_d=p_d[x_d]$. 
 The determinant in~\eqref{eq:dixonResultant} for 
 $f_{Cayley}(x_d)$ can be expanded to obtain 
 \[
  f_{Cayley}(x_d) = \frac{1}{\prod_{i=1}^{d-1} (s_i-t_i)}\sum_{\sigma \in S_d} (-1)^\sigma \prod_{i=1}^d p_{\sigma_i}[x_d](t_1,\dots,t_{i-1},s_i,\dots,s_{d-1}),
 \] 
where $S_d$ is the symmetric group of $\{1,\ldots,d\}$ and $(-1)^\sigma$ is the 
signature of the permutation $\sigma$. When we evaluate $f_{Cayley}(x_d)$ at $s_k = t_k = x_k^{\ast}$ for $1\leq k\leq d-1$ 
the denominator vanishes, and hence, so does the numerator because $f_{Cayley}(x_d)$ is a polynomial. 
Thus, by L'Hospital's rule, $f_{Cayley}'(x_d^{\ast})$ evaluated $s_k = t_k = x_k^{\ast}$ 
for $1\leq k\leq d-1$ is equal to 
\begin{equation}
\frac{\partial^{d}}{\partial s_1\cdots\partial s_{d-1}\partial x_{d}}\sum_{\sigma \in S_d} (-1)^\sigma \prod_{i=1}^d p_{\sigma_i}[x_d](t_1,\dots,t_{i-1},s_i,\dots,s_{d-1})
\label{eq:diffDet}
\end{equation}
evaluated at $s_k = x_k^{\ast}$, $t_k = x_k^{\ast}$, and $x_d = x_d^{\ast}$.
In principle, one could now apply the product rule and evaluate the combinatorially many terms in~\eqref{eq:diffDet}. Instead, 
we note that after applying the product rule a term is zero if it contains $p_{\sigma_i}(\underline{x}^{\ast})$ 
for any $\sigma\in S_d$ and $1\leq i\leq d$ (since $\underline{x}^{\ast}$ is a solution to~\eqref{eq:polynomialSystem}). 
There are precisely $d$ partial derivatives and $d$ terms in each product so that any nonzero term 
when expanding~\ref{eq:diffDet} has each $p_k$ differentiated precisely once. Finally, note that for each 
$1\leq k\leq d-1$ only the $1\leq i\leq k$ terms in the product depend 
on $s_k$. Hence, from~\eqref{eq:diffDet} we obtain
\[
 f_{Cayley}'(x_d^{\ast}) \bigg|_{\genfrac{}{}{0pt}{}{s_k = t_k = x_k^{\ast}}{1\leq k\leq d-1}} = \sum_{\sigma \in S_d} (-1)^\sigma \prod_{i=1}^d \frac{\partial p_{\sigma_i}}{\partial x_i}(\underline{x}^{\ast}).
\]
The result follows because the last expression is the determinant of the Jacobian 
matrix evaluated at $\underline{x}^{\ast}$. 
\end{proof}

As a consequence of Theorem~\ref{thm:jacobianExpression} we have the following
unavoidable conclusion that mathematically explains the numerical difficulties 
that practitioners 
have been experiencing with hidden variable resultant methods based on the Cayley 
resultant.
 \begin{theorem} 
Let $d\geq 2$. Then, there exist $p_1,\dots,p_d$ in~\eqref{eq:polynomialSystem} 
with a 
  simple root $\underline{x}^{\ast}\in\mathbb{C}^{d}$ such that 
   \[
  \kappa(x_d^{\ast},R_{Cayley}) \geq \|J(\underline{x}^{\ast})^{-1}\|_2^{d}
 \]
 and $\|J(\underline{x}^{\ast})^{-1}\|_2>1$.
 Thus, an eigenvalue of $R_{Cayley}(x_d)$ can be more sensitive 
 to perturbations than the corresponding root by a factor that grows 
 exponentially with $d$.
 \label{thm:condTheorem}
 \end{theorem}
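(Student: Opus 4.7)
My plan is to derive a closed-form expression for $\kappa(x_d^{\ast}, R_{Cayley})$ by combining Lemmas~\ref{lem:conditionExpression}, \ref{lem:rightCayleyVectors}, and~\ref{lem:CayleyEigenvectorStructure} with Theorem~\ref{thm:jacobianExpression}, and then to exhibit a polynomial system saturating the claimed bound. The crucial ingredient is to identify the generalized Rayleigh quotient $w^T R_{Cayley}'(x_d^{\ast}) v$ with $\det(J(\underline{x}^{\ast}))$, where $v = \mathrm{vec}(V)$ and $w = \mathrm{vec}(W)$ are the Vandermonde-form right and left eigenvectors supplied by Lemma~\ref{lem:CayleyEigenvectorStructure}.

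For this first step I would argue as follows. Lemma~\ref{lem:rightCayleyVectors} says that $R_{Cayley}(x_d) v$ encodes, as a coefficient tensor, the polynomial $f_{Cayley}(\underline{s}, \underline{x}^{\ast}; x_d)$ in $\underline{s}$, and the analogous fact for the left eigenvector shows that $w^T R_{Cayley}(x_d) v$ is the value $f_{Cayley}(\underline{x}^{\ast}, \underline{x}^{\ast}; x_d)$ (interpreted as a polynomial limit, since the defining ratio in~\eqref{eq:dixonResultant} is of $0/0$ form at $\underline{s} = \underline{t}$). Differentiation in $x_d$ commutes with both evaluations, so Theorem~\ref{thm:jacobianExpression} gives $w^T R_{Cayley}'(x_d^{\ast}) v = \det(J(\underline{x}^{\ast}))$, and Lemma~\ref{lem:conditionExpression} then yields
\[
\kappa(x_d^{\ast}, R_{Cayley}) = \frac{\|v\|_2 \|w\|_2}{|\det J(\underline{x}^{\ast})|}.
\]
Since in any standard degree-graded basis $\phi_0 \equiv 1$, both tensors $V$ and $W$ contain an entry of magnitude $1$, so $\|v\|_2 \|w\|_2 \geq 1$ and hence $\kappa(x_d^{\ast}, R_{Cayley}) \geq 1/|\det J(\underline{x}^{\ast})|$.

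The remaining step is to exhibit a system saturating $|\det J(\underline{x}^{\ast})|^{-1} \geq \|J(\underline{x}^{\ast})^{-1}\|_2^d$. Since $|\det J| = \prod_i \sigma_i(J)$ and $\|J^{-1}\|_2^d = \sigma_{\min}(J)^{-d}$, equality holds precisely when all singular values of $J$ coincide, i.e., when $J$ is a scalar multiple of an orthogonal matrix. The cleanest choice is the linear system $p_i(\underline{x}) = \sigma x_i$ for $1 \leq i \leq d$ with $0 < \sigma < 1$, which has unique simple root $\underline{x}^{\ast} = 0$, Jacobian $J = \sigma I_d$, $\|J^{-1}\|_2 = 1/\sigma > 1$, and (after invoking Theorem~\ref{thm:CayleyCramer} to reduce $R_{Cayley}$ to a scalar matrix polynomial) $\kappa = 1/\sigma^d = \|J^{-1}\|_2^d$. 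The main obstacle lies in the first step: Lemma~\ref{lem:rightCayleyVectors} is stated only for right multiplication, so a left analogue must be verified, and one must interpret $f_{Cayley}$ as a genuine polynomial (after cancellation with $\prod(s_i - t_i)$) so that differentiating in $x_d$ and evaluating at $\underline{s} = \underline{t} = \underline{x}^{\ast}$ is legitimate. Once this is in place, Theorem~\ref{thm:jacobianExpression} finishes the job.
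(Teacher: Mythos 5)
Your proposal is correct and follows essentially the same route as the paper: identify $w^TR_{Cayley}'(x_d^{\ast})v$ with $\det(J(\underline{x}^{\ast}))$ via Lemma~\ref{lem:rightCayleyVectors} and Theorem~\ref{thm:jacobianExpression}, bound $\|v\|_2\|w\|_2\geq 1$ using $\phi_0=1$, and saturate the bound with a linear system whose Jacobian has all singular values equal (your $p_i=\sigma x_i$ is a special case of the paper's $M\underline{x}-M\underline{x}^{\ast}=0$). The technical points you flag (the left-eigenvector analogue and the $0/0$ interpretation of $f_{Cayley}$ at $\underline{s}=\underline{t}$) are exactly the ones the paper handles in Lemma~\ref{lem:CayleyEigenvectorStructure} and the L'Hospital argument of Theorem~\ref{thm:jacobianExpression}.
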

\begin{proof} 
 Using Lemma~\ref{lem:rightCayleyVectors}, Theorem~\ref{thm:jacobianExpression} has the following 
 equivalent matrix form: 
 \[
  w^T R_{Cayley}'(x_d^{\ast}) v = \det(J(\underline{x}^{\ast})), 
 \]
 where $v = {\rm vec}(V)$, $w = {\rm vec}(W)$, and $V$ and $W$ are given in Lemma~\ref{lem:CayleyEigenvectorStructure}.
 Since $\phi_0 = 1$, we know that $\|v\|_2 \geq 1$ and $\|w\|_2 \geq 1$. Hence, by Lemma~\ref{lem:conditionExpression}
 \[
  \kappa(x_d^{\ast},R_{Cayley}) \geq |\det (J(\underline{x}^{\ast}))|^{-1}.
 \]
Denoting the singular values~\cite[Sec.~7.3]{Horn_13_01} of the matrix $J(\underline{x}^{\ast})$ by $\sigma_i$ , 
select $p_1,\dots,p_d$ and $\underline{x}^{\ast}\in\mathbb{C}^{d}$ such 
that $\left|{\rm det} (J(\underline{x}^{\ast})) \right|= \prod_{i=1}^d \sigma_i = \sigma_d^{d}$. 
Such polynomial systems do exist, for example, linear polynomial systems where $M \underline{x} - M \underline{x}^{\ast}=0$ 
and $M$ is a matrix with singular values $\sigma_1 = \sigma_2 = \cdots = \sigma_d$. To ensure that 
$\|J(\underline{x}^{\ast})^{-1}\|_2>1$ we also require $\sigma_d<1$.
Then, we have
\[
 \kappa(x_d^{\ast},R_{Cayley})^{-1} \leq \left|{\rm det} (J(\underline{x}^{\ast})) \right|= \prod_{i=1}^d \sigma_i = \sigma_d^{d} = \|J(\underline{x}^{\ast})^{-1}\|_2^{-d}.
\]
The result follows. 
\end{proof}

\begin{example}\label{exC}
Let $Q$ be a $d \times d$ orthogonal matrix, $Q Q^T = I_d$, having elements $q_{ij}$ for $i,j=1,\dots,d$, and let $\sigma < 1$. Consider the system of polynomial equations
\[ p_i = x_i^2 + \sigma \sum_{j=1}^d q_{ij} x_j = 0, \ \ \ i=1,\dots,d.  \]
The origin, $x^\ast=0 \in \mathbb{C}^d$, is a simple root of this system of equations. The Jacobian of the system at $0$ is  $J=\sigma Q$, and hence, the absolute conditioning of the problem is $\| J^{-1}\| = \sigma^{-1}$. Constructing the Cayley resultant matrix polynomial in the monomial basis, one readily sees that for this example the right and left eigenvectors for the eigenvalue $x_d^\ast=0$ satisfy $\|v\|=\|w\|=1$. As a consequence, $\kappa(x_d^{\ast},R_{Cayley})=\sigma^{-d}$.
\end{example}

We emphasize that this numerical instability is truly spectacular, affects the accuracy of $x_d^{\ast}$, 
and can grow exponentially with the dimension $d$. 

Moreover, Theorem~\ref{thm:condTheorem} 
holds for any degree-graded polynomial basis selected to represent $p_1,\ldots,p_d$ as long as $\phi_0 = 1$.  In particular, the associated numerical instability 
cannot be resolved in general by a special choice of polynomial basis. 
 
Theorem~\ref{thm:condTheorem} is pessimistic and importantly does not imply that the 
resultant method always loses accuracy, just that it might. In general, one must 
know the solutions to~\eqref{eq:polynomialSystem} and the singular values of the Jacobian 
matrix to be able to predict if and when the resultant method will be accurate. 

One should note that Theorem~\ref{thm:condTheorem} concerns absolute conditioning
and one may may wonder if a similar phenomenon also occurs in the relative sense.
In Section~\ref{sec:absRel} we show that the relative conditioning can also be increased by an 
exponential factor with $d$. 

\section{The Sylvester matrix is numerically unstable for bivariate rootfinding}\label{sec:Sylvester}
A popular alternative in two dimensions to the Cayley resultant matrix is the 
Sylvester matrix~\cite[Chap.~3]{Cox_13_01}, denoted here by $R_{Sylv}$. We now set out to show 
that the hidden variable 
resultant  based on $R_{Sylv}$ is also numerically unstable. However, since $d=2$ the instability has only 
a moderate impact in practice as the conditioning can only be at most squared.
With care, practical bivariate rootfinders can be based on the Sylvester resultant~\cite{Sorber_14_01} 
though there is the possibility that a handful digits are lost.

A neat way to define 
the Sylvester matrix that accommodates nonmonomial polynomial bases
is to define the matrix one row at a time. 

\begin{definition}[Sylvester matrix] 
 Let $q_1$ and $q_2$ be two univariate polynomials in $\mathbb{C}_{n}[x_1]$ of degree exactly $\tau_1$ and $\tau_2$, respectively.
Then, the Sylvester matrix $R_{Sylv}\in\mathbb{C}^{(\tau_1+\tau_2) \times (\tau_1+\tau_2)}$ 
associated with $q_1$ and $q_2$ is defined row-by-row as
\[
 R_{Sylv}\left( i , \, :\, \right) = Y^{i,1},  \qquad 0\leq i\leq \tau_{2}-1,
\]
where $Y^{i,1}$ is the row vector of coefficients such that $q_1(x)\phi_{i}(x) = \sum_{k=0}^{\tau_1+\tau_2-1} Y_k^{i,1} \phi_{k}(x)$
and 
\[
 R_{Sylv}\left( i + \tau_2 , \, :\, \right) = Y^{i,2},\quad 0\leq i\leq \tau_{1}-1,
\]
where $Y^{i,2}$ is the row vector of coefficients such that $q_2(x)\phi_{i}(x) = \sum_{k=0}^{\tau_1+\tau_2-1} Y_k^{i,2} \phi_{k}(x)$. 
\label{def:SylvesterMatrix} 
\end{definition}

In the monomial basis, i.e., $\phi_k(x) = x^k$, Definition~\ref{def:SylvesterMatrix} gives the 
Sylvester\footnote{Variants of~\eqref{eq:Monomial} include its transpose or a permutation of 
its rows and/or columns. Our analysis still applies after 
these aesthetic modifications with an appropriate change of 
indices. We have selected this variant for the convenience of 
indexing notation.} matrix of size $(\tau_1+\tau_2)\times(\tau_1+\tau_2)$ as~\cite[Chap.~3]{Cox_13_01}: 
% \begin{equation}
% R_{Sylv} = 
% \begin{pmatrix} 
% a_{n} & a_{n-1} & \ldots & a_0 &  &  \\[3pt]
%   & \ddots & \ddots & \ddots & \ddots \\[3pt]
% & & a_{n} & a_{n-1} &\ldots & a_0 \\[3pt] 
% b_{n} & b_{n-1} & \ldots & b_0 &  & \\[3pt]
%   &  \ddots & \ddots & \ddots & \ddots \\[3pt]
% & & b_{n} & b_{n-1} &\ldots & b_0 \\[3pt]
% \end{pmatrix}\in\mathbb{C}^{2n\times 2n},
% \label{eq:Monomial}
% \end{equation} 
\begin{equation}
R_{Sylv} = 
\begin{pmatrix} 
a_{0} & a_{1} & \ldots & a_{\tau_1} &  &  \\[3pt]
  & \ddots & \ddots & \ddots & \ddots \\[3pt]
& & a_{0} & a_{1} &\ldots & a_{\tau_1} \\[3pt] 
b_{0} & b_{1} & \ldots & b_{\tau_2} &  & \\[3pt]
  &  \ddots & \ddots & \ddots & \ddots \\[3pt]
& & b_{0} & b_{1} &\ldots & b_{\tau_2} \\[3pt]
\end{pmatrix}\begin{matrix} 
\coolrightbrace{x\\[3pt]y\vphantom{\ddots}\\[3pt]y\\[3pt]}{\tau_2 \text{ rows}}\\
\coolrightbrace{x\\[3pt]y\vphantom{\ddots}\\[3pt]y\\[3pt]}{\tau_1 \text{ rows}}\\
\end{matrix}%\in\mathbb{C}^{(\tau_1+\tau_2)\times (\tau_1+\tau_2)},
\label{eq:Monomial}
\end{equation}
% \begin{equation}
% R_{Sylv} = 
% \begin{pmatrix} 
% & & a_{n} & a_{n-1} &\ldots & a_0 \\[3pt] 
%   & \iddots & \iddots & \iddots & \iddots \\[3pt]
% a_{n} & a_{n-1} & \ldots & a_0 &  &  \\[3pt]
% & & b_{n} & b_{n-1} &\ldots & b_0 \\[3pt]
%  &  \iddots & \iddots & \iddots & \iddots \\[3pt]
% b_{n} & b_{n-1} & \ldots & b_0 &  & \\[3pt]
% \end{pmatrix}\in\mathbb{C}^{2n\times 2n},
% \label{eq:Monomial}
% \end{equation} 
where $q_1(x) = \sum_{k=0}^{\tau_1} a_k x^k$ and $q_2(x) = \sum_{k=0}^{\tau_2} b_kx^k$. 
%Definition~\ref{def:SylvesterMatrix} can be seen as a generalization 
%of the Sylvester matrix for the monomial basis to any degree-graded polynomial 
%basis.

\subsection{A generalization of Clenshaw's algorithm for degree-graded polynomial bases}\label{subsec:GeneralizedClenshaw}
Our goal is to use Lemma~\ref{lem:conditionExpression} to bound 
the condition number of the eigenvalues of the Sylvester matrix.
It turns out the right eigenvectors of $R_{Sylv}$ are in Vandermonde form. However, 
the left eigenvectors have a more peculiar structure and are related to 
the byproducts of a generalized Clenshaw's algorithm for degree-graded polynomial 
bases (see Lemma~\ref{lem:SylvesterEigenvectorStructure}). We develop a Clenshaw's
algorithm for degree-graded bases in this section with derivations of its 
properties in Appendix~\ref{sec:appendix}. 

The selected polynomial basis $\phi_0,\phi_1,\ldots,$ is degree-graded 
and hence, satisfies a recurrence relation of the form 
\begin{equation}
 \phi_{k+1}(x) = (\alpha_kx + \beta_k)\phi_k(x) + \sum_{j=1}^{k} \gamma_{k,j}\phi_{j-1}(x), \qquad k\geq 1,
\label{eq:degreeGradedRecurrence}
\end{equation} 
where $\phi_{1}(x) = (\alpha_0x + \beta_0)\phi_0(x)$ and $\phi_0(x) = 1$. 
If $\phi_0,\phi_1,\ldots,$ is an orthogonal polynomial basis, 
then~\eqref{eq:degreeGradedRecurrence} is a three-term recurrence and 
it is standard to employ Clenshaw's algorithm~\cite{Clenshaw_55_01} to evaluate 
polynomials expressed as $p(x) = \sum_{k=0}^n a_k\phi_k(x)$. This procedure can 
be extended to any degree-graded polynomial basis. 

Let $p(x)$ be expressed as $p(x) = \sum_{k=0}^n a_k\phi_k(x)$, where $\phi_0,\ldots,\phi_n$
is a degree-graded polynomial basis. 
One can evaluate $p(x)$ via the following procedure: Let $b_{n+1}[p](x) = 0$, and calculate 
$b_n[p](x),\ldots,b_1[p](x)$ from the following recurrence relation:
\begin{equation} 
  b_k[p](x) = a_k + (\alpha_kx + \beta_k)b_{k+1}[p](x) + \sum_{j=k+1}^{n-1} \gamma_{j,k+1}b_{j+1}[p](x), \qquad 1\leq k\leq n.
\label{eq:ClenshawLikeAlg}
\end{equation} 
We refer to the quantities $b_1[p](x),\ldots,b_{n+1}[p](x)$ as {\em Clenshaw shifts}
(in the monomial case they are called Horner shifts~\cite{Teran_10_01}). 
The value $p(x)$ can be written in terms of the Clenshaw shifts\footnote{Note that, although Lemma~\ref{lem:ClenshawDegreeGraded} is stated in a general form and holds for \emph{any} degree-graded basis, in this paper we fix the normalization $\max_{x \in \Omega} |\phi_j(x)|=1$, that implies in particular $\phi_0=1$ simplifying \eqref {lem:ClenshawDegreeGraded}.}.
\begin{lemma} 
Let $n$ be a positive integer, $x\in\mathbb{C}$, $\phi_0,\ldots,\phi_n$ a degree-graded basis satisfying~\eqref{eq:degreeGradedRecurrence}, 
$p(x) = \sum_{k=0}^n a_k\phi_k(x)$, and $b_{n+1}[p](x),\ldots,b_1[p](x)$ the Clenshaw shifts satisfying~\eqref{eq:ClenshawLikeAlg}. Then, 
 \begin{equation}
  p(x) = a_0\phi_0(x) + \phi_1(x)b_1[p](x) + \sum_{i=1}^{n-1} \gamma_{i,1}b_{i+1}[p](x).
 \label{eq:ClenshawDegreeGraded}
 \end{equation}
 \label{lem:ClenshawDegreeGraded}
\end{lemma}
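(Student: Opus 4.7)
The plan is a direct algebraic verification: rewrite the Clenshaw-type recurrence \eqref{eq:ClenshawLikeAlg} to isolate each coefficient $a_k$, substitute the resulting expression into $p(x)=\sum_{k=0}^n a_k\phi_k(x)$, and then use the basis recurrence \eqref{eq:degreeGradedRecurrence} to trigger a cascade of cancellations that leaves exactly the right-hand side of \eqref{eq:ClenshawDegreeGraded}. The structural reason this works is that \eqref{eq:ClenshawLikeAlg} is designed to be dual to \eqref{eq:degreeGradedRecurrence} in a summation-by-parts sense.

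Concretely, for $1 \le k \le n$ I would solve \eqref{eq:ClenshawLikeAlg} for $a_k$, obtaining
\[
a_k = b_k[p](x) - (\alpha_k x + \beta_k)\,b_{k+1}[p](x) - \sum_{j=k+1}^{n-1}\gamma_{j,k+1}\,b_{j+1}[p](x),
\]
and then write $p(x) - a_0\phi_0(x) = \sum_{k=1}^n a_k\phi_k(x)$ as a sum of three contributions. In the $(\alpha_k x+\beta_k)b_{k+1}\phi_k$ contribution I would apply \eqref{eq:degreeGradedRecurrence} in the form $(\alpha_k x+\beta_k)\phi_k(x)=\phi_{k+1}(x)-\sum_{j=1}^k\gamma_{k,j}\phi_{j-1}(x)$. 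The piece $\sum_{k=1}^n b_{k+1}[p]\phi_{k+1}$ then telescopes against most of $\sum_{k=1}^n b_k[p]\phi_k$, using the boundary condition $b_{n+1}[p](x)=0$, and only the term $b_1[p](x)\phi_1(x)$ survives.

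What is left are two double sums involving the $\gamma$-coefficients, one originating from the Clenshaw recurrence and one from the basis recurrence. I would swap the order of summation in each and isolate the terms with $\phi_0$, i.e.\ those arising from $j=1$ in the basis recurrence; since $\phi_0=1$, these contribute precisely $\sum_{i=1}^{n-1}\gamma_{i,1}\,b_{i+1}[p](x)$, which is the third term on the right-hand side of \eqref{eq:ClenshawDegreeGraded}. The remaining $j\ge 2$ parts of the two double sums, after a relabelling of indices, match term-by-term and cancel.

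The main technical obstacle is the careful bookkeeping of summation ranges when swapping indices: in particular, verifying that the upper limit $k=n$ contributes nothing (because $b_{n+1}[p](x)=0$), cleanly separating the $j=1$ diagonal slice of the double sum from its bulk, and checking that the two bulk contributions are literally the same expression with the summation variables renamed. An induction on $n$ is a conceivable alternative, but it is awkward because the Clenshaw shifts $b_k[p](x)$ depend globally on the whole tail $a_k,\ldots,a_n$ of coefficients, so the direct manipulation sketched above is more transparent.
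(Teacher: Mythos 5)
Your proposal is correct and follows essentially the same route as the paper's proof: both solve the Clenshaw recurrence for $a_k$, substitute into $p(x)=\sum_k a_k\phi_k(x)$, interchange summations, and invoke the basis recurrence~\eqref{eq:degreeGradedRecurrence} so that everything collapses to $\gamma_{j,1}\phi_0(x)$ terms plus the surviving telescoped term $\phi_1(x)b_1[p](x)$. The only difference is cosmetic (you apply the recurrence before grouping the double sums, the paper groups first), and your index bookkeeping, including the use of $b_{n+1}[p](x)=0$, checks out.
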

\begin{proof} 
 See Appendix~\ref{sec:appendix}.
\end{proof}

Clenshaw's algorithm for degree-graded polynomial bases is summarized in Figure~\ref{fig:ClenshawAlgorithm}. 
We note that because of the full recurrence in~\eqref{eq:ClenshawLikeAlg} the 
algorithm requires $\mathcal{O}(n^2)$ operations to evaluate $p(x)$. Though this 
algorithm may not be of significant practical importance, it is of theoretical interest
for the conditioning analysis of some linearizations from the so-called $\mathbb{L}_1$- 
or $\mathbb{L}_2$-spaces~\cite{Mackey_06_01} when degree-graded bases are employed~\cite{Nakatsukasa_13_01}. 

\begin{figure} 
 \centering 
 \fbox{
 \parbox{.8\textwidth}{ 
 \textbf{Clenshaw's algorithm for degree-graded polynomial bases }
 
 \vspace{.2cm}
 
 Let $\phi_0,\phi_1,\ldots,$ satisfy~\eqref{eq:degreeGradedRecurrence} and $p(x) = \sum_{k=0}^n a_k\phi_k(x)$.
 
 \vspace{.1cm}
 
 Set $b_{n+1}[p](x) = 0$. 
 
  \vspace{.1cm}
 
 $\quad$\textbf{for} $k = n,n-1,\ldots,1$ \textbf{do} 
 
 $\qquad$ $b_k[p](x) = a_k + (\alpha_kx + \beta_k)b_{k+1}[p](x) + \sum_{j=k+1}^{n-1} \gamma_{j,k+1}b_{j+1}[p](x)$
 
 $\quad$\textbf{end}
 
  \vspace{.1cm}
 
 $p(x) = a_0\phi_0(x) + \phi_1(x)b_1[p](x) + \sum_{j=1}^{n-1} \gamma_{j,1}b_{j+1}[p](x)$.
 }}
 \caption{Clenshaw's algorithm for evaluating polynomials expressed in a 
 degree-graded basis.}
\label{fig:ClenshawAlgorithm}
\end{figure}

There is a remarkable and interesting connection between Clenshaw shifts and 
the quotient $(p(x)-p(y))/(x-y)$, which will be useful when deriving the left
eigenvectors of $R_{Sylv}$. 
\begin{theorem} 
With the same set up as Lemma~\ref{lem:ClenshawDegreeGraded} we have 
\begin{equation}
\frac{p(x)-p(y)}{x-y} = \sum_{i=0}^{n-1} \alpha_i b_{i+1}[p](y)\phi_i(x), \quad x\neq y
\label{eq:diffDegreeGraded}
\end{equation}
and
\begin{equation}
 p'(x) = \sum_{i=0}^{n-1} \alpha_i b_{i+1}[p](x)\phi_i(x).
\label{eq:diffClenshaw}
\end{equation}
\label{thm:diffDegreeGraded}
\end{theorem}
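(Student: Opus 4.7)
The plan is to prove the divided-difference identity \eqref{eq:diffDegreeGraded} first, and then deduce \eqref{eq:diffClenshaw} by a continuity argument: the right-hand side of \eqref{eq:diffDegreeGraded} is manifestly a polynomial in $y$, and the left-hand side extends to a polynomial in $y$ by dividing out the removable singularity at $y=x$; taking $y\to x$ in both sides converts the left-hand side to $p'(x)$ and the right-hand side to the stated expression.

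For \eqref{eq:diffDegreeGraded}, I would abbreviate $B_k := b_k[p](y)$ and compute $(x-y)\sum_{i=0}^{n-1}\alpha_i B_{i+1}\phi_i(x)$ by splitting each summand into the $\alpha_i x$ part and the $\alpha_i y$ part. On one hand, the basis recurrence \eqref{eq:degreeGradedRecurrence} gives
\[
\alpha_i x\,\phi_i(x) = \phi_{i+1}(x) - \beta_i\phi_i(x) - \sum_{j=1}^{i}\gamma_{i,j}\phi_{j-1}(x),
\]
(with the empty sum when $i=0$, and using $\phi_1(x)=\alpha_0 x+\beta_0$). On the other hand, the Clenshaw recurrence \eqref{eq:ClenshawLikeAlg}, after shifting $j\mapsto j-1$ in its inner sum, gives
\[
\alpha_i y\,B_{i+1} = B_i - a_i - \beta_i B_{i+1} - \sum_{j=i+2}^{n}\gamma_{j-1,i+1}B_j,
\]
where I extend the recurrence to $k=0$ by defining $B_0 := a_0 + (\alpha_0 y+\beta_0)B_1 + \sum_{j=1}^{n-1}\gamma_{j,1}B_{j+1}$, which by Lemma~\ref{lem:ClenshawDegreeGraded} (together with $\phi_0=1$) equals $p(y)$.

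Substituting these two expansions and summing over $i=0,\dots,n-1$, the $\beta_i$ contributions cancel pairwise. The remaining terms regroup into three pieces: a telescoping sum
\[
\sum_{i=0}^{n-1}\bigl[B_{i+1}\phi_{i+1}(x) - B_i\phi_i(x)\bigr] = B_n\phi_n(x) - B_0\phi_0(x) = a_n\phi_n(x) - p(y),
\]
using $B_{n+1}=0\Rightarrow B_n=a_n$ and $\phi_0=1$; a diagonal sum $\sum_{i=0}^{n-1}a_i\phi_i(x)$, which combines with the previous piece to give exactly $p(x)-p(y)$; and two double sums involving $\gamma$ coefficients, which must cancel. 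This last cancellation is the only nontrivial step: after swapping the order of summation in $\sum_{i=0}^{n-1}B_{i+1}\sum_{j=1}^{i}\gamma_{i,j}\phi_{j-1}(x)$ via $k=j-1$, one obtains $\sum_{k=0}^{n-2}\phi_k(x)\sum_{i=k+1}^{n-1}\gamma_{i,k+1}B_{i+1}$, which matches the other $\gamma$ double sum term by term.

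The main obstacle is the bookkeeping of the $\gamma$ double sums, since the recurrence \eqref{eq:degreeGradedRecurrence} is a full (not three-term) recurrence and the cancellation requires a careful reindexing. Once \eqref{eq:diffDegreeGraded} is established, \eqref{eq:diffClenshaw} is immediate by the continuity argument above, or equivalently by formally setting $y=x$ in the polynomial identity obtained by clearing the denominator $x-y$.
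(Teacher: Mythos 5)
Your proof is correct, and it takes a genuinely different route from the paper's. The paper first reduces to the case $p=\phi_n$ by linearity of the Clenshaw shifts, then proves \eqref{eq:diffDegreeGraded} by strong induction on $n$, which in turn requires an auxiliary result (Lemma~\ref{lem:NewRecurrence}, an alternative recurrence $b_{j}[\phi_{n+1}](x) = (\alpha_nx+\beta_n)b_{j}[\phi_{n}](x) + \sum_{s=j+1}^n\gamma_{n,s}b_{j}[\phi_{s-1}](x)$, itself proved by a separate induction). Your argument is instead a single summation-by-parts computation: expand $(x-y)\sum_i\alpha_iB_{i+1}\phi_i(x)$ using the basis recurrence for the $x$-part and the Clenshaw recurrence for the $y$-part, and watch the $\beta$-terms cancel, the telescoping piece and the diagonal piece combine to $p(x)-p(y)$, and the two $\gamma$ double sums annihilate after the reindexing $k=j-1$. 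I verified the bookkeeping: the boundary values $B_n=a_n$ (from $B_{n+1}=0$) and $B_0=p(y)$ (your extension of the recurrence to $k=0$, identified via Lemma~\ref{lem:ClenshawDegreeGraded} and $\phi_0=1$, $\phi_1(y)=\alpha_0y+\beta_0$) are exactly what is needed, and there is no circularity since Lemma~\ref{lem:ClenshawDegreeGraded} is proved independently. What each approach buys: yours is shorter and avoids the double induction and the auxiliary lemma entirely, at the cost of a denser reindexing step; the paper's route isolates the recurrence of Lemma~\ref{lem:NewRecurrence} as a statement of possible independent interest and keeps each inductive step mechanical. Your passage from \eqref{eq:diffDegreeGraded} to \eqref{eq:diffClenshaw} by letting $y\to x$ in the polynomial identity is the same in substance as the paper's appeal to L'Hospital's rule.
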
 
\begin{proof} 
 See Appendix~\ref{sec:appendix}.
\end{proof}

The relation between the derivative and Clenshaw shifts in~\eqref{eq:diffClenshaw} has been 
noted by Skrzipek for orthogonal polynomial bases in~\cite{Skrzipek_98_01}, where it was
used to construct a so-called {\em extended Clenshaw's algorithm} for evaluating polynomial derivatives.
Using Theorem~\ref{thm:diffDegreeGraded} and~\cite{Skrzipek_98_01} an extended Clenshaw's algorithm 
for polynomials expressed in a degree-graded basis is immediate. 

\subsection{The eigenvector structure of the Sylvester matrix} 
We now set $q_1 = p_1[x_2]$ and $q_2=p_2[x_2]$ (considering $x_2$ as the 
hidden variable), 
and we are interested in the eigenvectors of the matrix polynomial $R_{Sylv}(x_2^{\ast})$,
when $(x_1^{\ast},x_2^{\ast})$ is a solution to~\eqref{eq:polynomialSystem} when $d = 2$. 
It turns out that the right eigenvectors of $R_{Sylv}(x_2^{\ast})$ are in 
Vandermonde form, while the left eigenvectors are related to the Clenshaw shifts (see 
Section~\ref{subsec:GeneralizedClenshaw}).
\begin{lemma} 
 Suppose that $\underline{x}^{\ast}=(x_1^{\ast},x_2^{\ast})$ is a simple root 
 of~\eqref{eq:polynomialSystem} and that $p_1[x_2]$ and $p_2[x_2]$ are of 
 degree $\tau_1$ and $\tau_2$, respectively, in $x_1$. 
 The right eigenvector of $R_{Sylv}(x_2^{\ast})$ corresponding to the eigenvalue 
 $x_2^{\ast}$ is 
 \[
  v_k = \phi_{k}(x_1^{\ast}), \qquad 0\leq k\leq \tau_1+\tau_2-1, 
  \]
  and the left eigenvector is defined as
  \[
  w_i = \begin{cases}
  -\alpha_{i} b_{i+1}[q_{2}](x_1^{\ast}), & 0\leq i\leq \tau_2-1,\\
  \alpha_{i-\tau_2} b_{i-\tau_2+1}[q_{1}](x_1^{\ast}), & \tau_2\leq i\leq \tau_1+\tau_2-1,
  \end{cases}
  \]
 where $q_j = p_j[x_2^{\ast}]$ and $b_k[q_j](x_1^{\ast})$ are the Clenshaw shifts 
 with respect to $\{\phi_0,\phi_1,\ldots,\}$, while the coefficients $\alpha_i$ are defined as in \eqref{eq:degreeGradedRecurrence}. 
 \label{lem:SylvesterEigenvectorStructure}
\end{lemma}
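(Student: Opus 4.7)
The plan is to verify the two claims by checking that $R_{Sylv}(x_2^{\ast})v=0$ and $w^T R_{Sylv}(x_2^{\ast})=0$ directly from the row-by-row definition of the Sylvester matrix (Definition~\ref{def:SylvesterMatrix}), and then arguing that $v$ and $w$ are both nonzero. The nontrivial ingredient on the left eigenvector side is Theorem~\ref{thm:diffDegreeGraded}, which rewrites the divided-difference polynomial $(q(x)-q(y))/(x-y)$ in terms of Clenshaw shifts.

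For the right eigenvector, I would observe that by Definition~\ref{def:SylvesterMatrix} the $i$-th row of $R_{Sylv}(x_2^{\ast})$ is, by construction, the coefficient vector (in the basis $\{\phi_k\}$) of $q_1(x)\phi_i(x)$ when $0\leq i\leq \tau_2-1$ and of $q_2(x)\phi_{i-\tau_2}(x)$ when $\tau_2\leq i\leq \tau_1+\tau_2-1$. Pairing such a row with the Vandermonde-form vector $v_k=\phi_k(x_1^{\ast})$ simply evaluates the corresponding polynomial at $x_1^{\ast}$, producing $q_1(x_1^{\ast})\phi_i(x_1^{\ast})$ or $q_2(x_1^{\ast})\phi_{i-\tau_2}(x_1^{\ast})$. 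Both factors $q_j(x_1^{\ast})=p_j(x_1^{\ast},x_2^{\ast})$ vanish, so $R_{Sylv}(x_2^{\ast})v=0$, and $v\neq 0$ because $v_0=\phi_0(x_1^{\ast})=1$.

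For the left eigenvector, I would exploit the same row interpretation but in the dual direction: the row vector $w^T R_{Sylv}(x_2^{\ast})$ is the coefficient vector (in $\{\phi_k\}$) of the single polynomial
\[
  A(x)q_1(x) + B(x)q_2(x), \qquad A(x)=\sum_{i=0}^{\tau_2-1}w_i\phi_i(x),\quad B(x)=\sum_{i=0}^{\tau_1-1}w_{i+\tau_2}\phi_i(x).
\]
Substituting the claimed formulas for $w_i$ and applying~\eqref{eq:diffDegreeGraded} with $y=x_1^{\ast}$, together with $q_j(x_1^{\ast})=0$, I would identify
\[
  A(x) = -\frac{q_2(x)-q_2(x_1^{\ast})}{x-x_1^{\ast}} = -\frac{q_2(x)}{x-x_1^{\ast}}, \qquad B(x) = \frac{q_1(x)}{x-x_1^{\ast}},
\]
both of which are honest polynomials (no pole at $x=x_1^{\ast}$). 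Plugging these into $Aq_1+Bq_2$ gives the zero polynomial, so $w^T R_{Sylv}(x_2^{\ast})=0$. To see that $w\neq 0$, note that $q_j(x)/(x-x_1^{\ast})$ is a nonzero polynomial of degree $\tau_j-1$, and hence has nonzero coefficients in $\{\phi_0,\dots,\phi_{\tau_j-1}\}$; this forces at least one $w_i$ to be nonzero.

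The main obstacle is recognizing the correct closed form for the left eigenvector: without the generalized Clenshaw identity~\eqref{eq:diffDegreeGraded}, the relation $w^T R_{Sylv}(x_2^{\ast})=0$ would amount to solving a nontrivial triangular-like linear system for the $w_i$. Theorem~\ref{thm:diffDegreeGraded} turns this into the algebraic identity $q_1(x)\cdot(-q_2(x)/(x-x_1^{\ast}))+q_2(x)\cdot(q_1(x)/(x-x_1^{\ast}))\equiv 0$, which is immediate. This is precisely why the Clenshaw-type development of Section~\ref{subsec:GeneralizedClenshaw} was set up in advance.
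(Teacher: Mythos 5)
Your proposal is correct and follows essentially the same route as the paper: the right eigenvector is verified row-by-row exactly as in the paper's proof, and the left eigenvector argument is the paper's computation in dual form --- the paper pairs $w^TR_{Sylv}(x_2^{\ast})$ with a generic Vandermonde vector $\Phi(x)$ and invokes the basis property, while you read off $w^TR_{Sylv}(x_2^{\ast})$ directly as the coefficient vector of $A(x)q_1(x)+B(x)q_2(x)$, with both arguments resting on Theorem~\ref{thm:diffDegreeGraded} and the cancellation $-q_2q_1/(x-x_1^{\ast})+q_1q_2/(x-x_1^{\ast})\equiv 0$. Your additional remarks that $v\neq 0$ and $w\neq 0$ are a small, correct bonus not spelled out in the paper.
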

\begin{proof} 
By construction we have, for $0\leq i\leq \tau_2-1$,
 \[
  R_{Sylv}\left( i, \, : \right) v =  \sum_{k=0}^{\tau_1+\tau_2-1} Y_{k}^{i,1}(x_2^{\ast}) \phi_{k}(x_1^{\ast}) = q_1(x_1^{\ast})\phi_i(x_1^{\ast}) = 0
\]
and, for $0\leq i\leq \tau_1-1$,
\[
   R_{Sylv}\left( i+\tau_2, \, : \right) v =  \sum_{k=0}^{\tau_1+\tau_2-1} Y_{k}^{i,2}(x_2^{\ast}) \phi_{k}(x_1^{\ast}) = q_2(x_1^{\ast})\phi_i(x_1^{\ast}) = 0. 
\]
Thus, $v$ is a right eigenvector of $R_{Sylv}(x_2^{\ast})$ corresponding to the 
eigenvalue $x_2^{\ast}$. 

For the left eigenvector, first note that for any vector $\Phi$ of 
the form $\Phi_k = \phi_{k}(x)$ for $0\leq k\leq \tau_1+\tau_2-1$ we have
by Theorem~\ref{thm:diffDegreeGraded}
\[ 
\begin{aligned} 
 w^T R_{Sylv}(x_2^{\ast}) \Phi &= -\sum_{i=0}^{\tau_2-1} \alpha_{i}b_{i+1}[q_2](x_1^{\ast})\phi_{i}(x)q_1(x) + \sum_{i=0}^{\tau_1-1} \alpha_{i}b_{i+1}[q_1](x_1^{\ast})\phi_{i}(x)q_2(x)\\
 &=-\frac{q_2(x) - q_2(x_1^{\ast})}{x-x_1^{\ast}}q_1(x) + \frac{q_1(x) - q_1(x_1^{\ast})}{x-x_1^{\ast}}q_2(x)\\
 &=-\frac{q_2(x)}{x-x_1^{\ast}}q_1(x) + \frac{q_1(x)}{x-x_1^{\ast}}q_2(x) = 0,
\end{aligned}
\]
where the second from last equality follows because $q_1(x_1^{\ast}) = q_2(x_1^{\ast}) = 0$. 
Since~\eqref{thm:diffDegreeGraded} holds for any $x$ and 
$\{\phi_0,\phi_1,\ldots,\phi_{\tau_1+\tau_2-1}\}$ is a %Chebyshev--Haar system \vntodo{Cite something?} we conclude that 
basis of $\mathbb{C}_{\tau_1+\tau_2-1}[x]$, we deduce that
$w^T R_{Sylv}(x_2^{\ast}) = 0$, and hence, $w$ is a left eigenvector of $R_{Sylv}$ corresponding to the 
eigenvalue $x_2^{\ast}$. %\attodo{Happy with this proof.}
\end{proof}

\subsection{On the generalized Rayleigh quotient of the Sylvester matrix} 
To bound $\kappa(R_{Sylv},x_d^{\ast})$ we look at the absolute value of the generalized 
Rayleigh quotient of $R'_{Sylv}(x_2^{\ast})$, whenever $\underline{x}^{\ast}$ is such that $x_2^{\ast}$ is a simple eigenvalue of $R_{Sylv}(x_2)$. Lemma~\ref{lem:SylvesterEigenvectorStructure}
allows us to show how the generalized Rayleigh 
quotient of $R_{Sylv}'(x_2^{\ast})$ relates to the determinant of the Jacobian. 
\begin{lemma}
With the same assumptions as in Lemma~\ref{lem:SylvesterEigenvectorStructure}, we have
\[
\frac{|w^TR_{Sylv}'(x_2^{\ast})v|}{\|v\|_2\|w\|_2} \leq \frac{|{\rm det}\left( J(\underline{x}^{\ast})\right)|}{\|w\|_2},
\]
where $w$ and $v$ are the left and right eigenvectors of $R_{Sylv}$, respectively, and 
$J(\underline{x}^{\ast})$ is the Jacobian matrix in~\eqref{eq:Jacobian}. 
\label{lem:SylvesterJacobian}
\end{lemma}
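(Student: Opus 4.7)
The plan is to show in fact the stronger identity $w^T R_{Sylv}'(x_2^\ast) v = \det(J(\underline{x}^\ast))$ and then use $\|v\|_2 \geq 1$ to deduce the claimed inequality. Since $\phi_0 = 1$, the first entry of $v$ equals $\phi_0(x_1^\ast) = 1$, so $\|v\|_2 \geq 1$ is immediate and the reduction from the stated inequality to the identity is routine.

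To establish the identity, first I would differentiate $R_{Sylv}$ row-by-row with respect to $x_2$. By Definition~\ref{def:SylvesterMatrix}, the $i$-th row for $0\leq i\leq \tau_2-1$ encodes the coefficients of $p_1[x_2](x)\,\phi_i(x)$ in the basis $\{\phi_0,\ldots,\phi_{\tau_1+\tau_2-1}\}$. Differentiating with respect to $x_2$ yields the coefficient vector of $\frac{\partial p_1}{\partial x_2}(x,x_2^\ast)\,\phi_i(x)$, and similarly for the lower block with $p_2$. Evaluating the resulting vector against $v$, whose entries are $\phi_k(x_1^\ast)$, collapses these polynomials to their values at $x_1 = x_1^\ast$, giving
\[
(R_{Sylv}'(x_2^\ast) v)_i = \begin{cases} \frac{\partial p_1}{\partial x_2}(\underline{x}^\ast)\,\phi_i(x_1^\ast), & 0\leq i\leq \tau_2-1,\\[3pt] \frac{\partial p_2}{\partial x_2}(\underline{x}^\ast)\,\phi_{i-\tau_2}(x_1^\ast), & \tau_2\leq i\leq \tau_1+\tau_2-1. \end{cases}
\]

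Next I would form $w^T R_{Sylv}'(x_2^\ast) v$ using the explicit form of $w$ from Lemma~\ref{lem:SylvesterEigenvectorStructure}. The two blocks produce sums of the shape $\sum_{i=0}^{\tau_j-1}\alpha_i b_{i+1}[q_j](x_1^\ast)\phi_i(x_1^\ast)$, and here is where Theorem~\ref{thm:diffDegreeGraded} is the key tool: equation \eqref{eq:diffClenshaw} identifies precisely such a sum with $q_j'(x_1^\ast) = \frac{\partial p_j}{\partial x_1}(\underline{x}^\ast)$. Substituting gives
\[
w^T R_{Sylv}'(x_2^\ast) v = -\frac{\partial p_2}{\partial x_1}(\underline{x}^\ast)\frac{\partial p_1}{\partial x_2}(\underline{x}^\ast) + \frac{\partial p_1}{\partial x_1}(\underline{x}^\ast)\frac{\partial p_2}{\partial x_2}(\underline{x}^\ast) = \det(J(\underline{x}^\ast)).
\]
Dividing by $\|v\|_2\|w\|_2$ and using $\|v\|_2\geq 1$ produces the stated bound.

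The main obstacle is not conceptual but bookkeeping: one must carefully track which block of the Sylvester matrix contributes the $p_1$-derivatives and which contributes the $p_2$-derivatives, and match the signs in the definition of $w$ so that the antisymmetric combination assembling $\det J$ falls out. Once the entries of $R_{Sylv}'(x_2^\ast) v$ are written out and Theorem~\ref{thm:diffDegreeGraded} is applied block-by-block, the rest follows mechanically. Note in particular that the argument exploits the full power of the generalized Clenshaw machinery from Section~\ref{subsec:GeneralizedClenshaw}: the Sylvester left eigenvector is custom-built out of Clenshaw shifts precisely so that evaluating it against derivatives reconstructs the Jacobian determinant.
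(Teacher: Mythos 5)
Your proposal is correct and follows essentially the same route as the paper's own proof: both establish the identity $w^TR_{Sylv}'(x_2^{\ast})v=\det(J(\underline{x}^{\ast}))$ by evaluating the differentiated rows against the Vandermonde vector $v$, applying \eqref{eq:diffClenshaw} block-by-block to recover $\partial q_j/\partial x_1$, and then conclude via $\|v\|_2\geq 1$ from $\phi_0=1$. No gaps.
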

\begin{proof}
By Lemma~\ref{lem:SylvesterEigenvectorStructure} we know the 
structure of $v$ and $w$. Hence, we have 
\[
\begin{aligned}
\!w^TR_{Sylv}'(x_2^{\ast})v &= -\!\!\sum_{i=0}^{\tau_2-1}\!\! \alpha_{i}b_{i+1}[q_2](x_1^{\ast})\phi_{i}(x_1^{\ast})\frac{\partial q_1}{\partial x_2}(x_1^{\ast}) + \!\!\sum_{i=0}^{\tau_1-1}\!\! \alpha_{i}b_{i+1}[q_1](x_1^{\ast})\phi_{i}(x_1^{\ast})\frac{\partial q_2}{\partial x_2}(x_1^{\ast})\\
& = -\frac{\partial q_1}{\partial x_2}(x_1^{\ast})\frac{\partial q_2}{\partial x_1}(x_1^{\ast}) + \frac{\partial q_1}{\partial x_1}(x_1^{\ast})\frac{\partial q_2}{\partial x_2}(x_1^{\ast}),
\end{aligned}
\]
where the last equality used the relation in~\eqref{eq:diffClenshaw}. The result now follows
since this final expression equals ${\rm det}\left( J(\underline{x}^{\ast})\right)$ and 
since $\phi_0 = 1$ we have $\|v\|_2\geq 1$. 
\end{proof}

 \begin{theorem} 
There exist $p_1$ and $p_2$ in~\eqref{eq:polynomialSystem} 
with a simple root $\underline{x}^{\ast}\in\mathbb{C}^{2}$ such that 
   \[
  \kappa(x_2^{\ast},R_{Sylv}) \geq \|J(\underline{x}^{\ast})^{-1}\|_2^{2}
 \]
 and $\|J(\underline{x}^{\ast})^{-1}\|_2> 1$.
 Thus, an eigenvalue of $R_{Sylv}(x_2)$ can be squared more sensitive 
 to perturbations than the corresponding root in the absolute sense.
 \label{thm:condSylvester}
 \end{theorem}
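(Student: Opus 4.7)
The plan is to follow the template of the Cayley proof (Theorem~\ref{thm:condTheorem}), using Lemma~\ref{lem:conditionExpression} together with Lemma~\ref{lem:SylvesterJacobian} to bound $\kappa(x_2^{\ast}, R_{Sylv})$ from below in terms of the Jacobian determinant, and then to exhibit an explicit family of polynomial systems that saturates this bound. By Lemma~\ref{lem:SylvesterJacobian}, whenever $x_2^{\ast}$ is a simple eigenvalue of $R_{Sylv}$ we have
\[
\kappa(x_2^{\ast}, R_{Sylv}) \;=\; \frac{\|v\|_2\|w\|_2}{|w^TR'_{Sylv}(x_2^{\ast})v|} \;\geq\; \frac{\|w\|_2}{|\det J(\underline{x}^{\ast})|}.
\]
Thus it suffices to produce a system with $|\det J(\underline{x}^{\ast})|$ as small as $\sigma_{\min}(J)^2$ and with $\|w\|_2\geq 1$.

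Next I would take essentially the bivariate version of Example~\ref{exC}. Fix $0<\sigma<1$ and a $2\times 2$ orthogonal matrix $Q = (q_{ij})$, and let
\[
p_i(x_1,x_2) = x_i^2 + \sigma\sum_{j=1}^{2} q_{ij}x_j, \qquad i=1,2.
\]
Then $\underline{x}^{\ast} = (0,0)$ is a simple root with Jacobian $J(\underline{x}^{\ast}) = \sigma Q$, so $\|J(\underline{x}^{\ast})^{-1}\|_2 = \sigma^{-1} > 1$ and $|\det J(\underline{x}^{\ast})| = \sigma^2$. Hiding $x_2$ at the value $x_2^{\ast}=0$ gives the univariate polynomials $q_1(x_1) = x_1^2 + \sigma q_{11}x_1$ of degree $\tau_1=2$ and $q_2(x_1) = \sigma q_{21}x_1$ of degree $\tau_2=1$.

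I would then compute the Clenshaw shifts in the monomial basis, where $\alpha_k=1$, $\beta_k=0$, and $\gamma_{k,j}=0$. The recurrence in \eqref{eq:ClenshawLikeAlg} gives $b_{\tau_1}[q_1](0)$ equal to the leading coefficient of $q_1$, namely $1$. By Lemma~\ref{lem:SylvesterEigenvectorStructure}, this value appears as the entry $w_{\tau_1+\tau_2-1} = \alpha_{\tau_1-1}\,b_{\tau_1}[q_1](0) = 1$ of the left eigenvector $w$, so $\|w\|_2\geq 1$. Substituting into the displayed inequality yields
\[
\kappa(x_2^{\ast}, R_{Sylv}) \;\geq\; \frac{\|w\|_2}{|\det J(\underline{x}^{\ast})|} \;\geq\; \frac{1}{\sigma^{2}} \;=\; \|J(\underline{x}^{\ast})^{-1}\|_2^{2},
\]
which is the desired conclusion.

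The main obstacle is the same subtlety as in the Cayley case: the inequality $\kappa \geq \|w\|_2/|\det J|$ is only useful if $\|w\|_2$ does not shrink at the same rate as $|\det J|$. Unlike the Cayley situation, where $\phi_0=1$ forces both $\|v\|_2,\|w\|_2\geq 1$ automatically, here one must inspect the Clenshaw-shift structure of the left eigenvector and exploit the fact that the top Clenshaw shift of a polynomial equals its leading coefficient. Picking one of the $p_i$ to be nonlinear in $x_i$ (with leading coefficient $1$) is precisely what guarantees $\|w\|_2\geq 1$ independently of $\sigma$, and this is the key design choice in the construction.
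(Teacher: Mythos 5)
Your proof is correct and follows essentially the same route as the paper: Lemma~\ref{lem:conditionExpression} combined with Lemma~\ref{lem:SylvesterJacobian}, plus an explicit system whose Jacobian at the root has equal singular values and whose left eigenvector satisfies $\|w\|_2\geq 1$ because the top Clenshaw shift of $q_1$ equals its unit leading coefficient --- indeed your example is precisely the paper's own Example~\ref{exS}, while the paper's proof uses a slightly different system, $p_1=x_1^nx_2^n+u^{1/2}x_1$, $p_2=\alpha_{n-1}^{-1}(x_1^n+x_2^n)+u^{1/2}x_2$, engineered so the same argument works in any degree-graded basis. The only nitpick is that you should require $q_{21}\neq 0$ so that $p_2$ genuinely has degree $\tau_2=1$ in $x_1$; a diagonal orthogonal $Q$ would degenerate the Sylvester construction.
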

\begin{proof} 
We give an example for which $\|w\|_2\geq 1$ in Lemma~\ref{lem:SylvesterJacobian}. 
For some positive parameter $u$ and for some $n \geq 2$ consider the polynomials
\[ 
p_1(x_1,x_2) = x_1^n x_2^n + u^{1/2} x_1, \ \ \ p_2(x_1,x_2) = \alpha^{-1}_{n-1}(x_1^n + x_2^n)  + u^{1/2} x_2.
\]
One can verify that $\underline{x}^\ast = (0,0)$ is a common root\footnote{By a change of variables, there is an analogous example with a solution anywhere in the complex plane.}. 
Since $| b_n[q_2](0) | = \alpha_{n-1} \alpha_{n-1}^{-1} = 1$ we 
have $\| w \|_2 \geq 1$. The result then follows from 
$|\det(J(\underline{x}^\ast))| = \|J(\underline{x}^{\ast})^{-1}\|_2^{-2}$ and
Lemma~\ref{lem:SylvesterJacobian}.
\end{proof}

\begin{example}\label{exS}
Let us specialize Example \ref{exC} to $d=2$, i.e., for some $\sigma < 1$ and $\alpha^2 + \beta^2 = 1$ let us consider the system
\[ p_1 = x_1^2 + \sigma(\alpha x_1 + \beta x_2) = 0, \ \ \ p_2 = x_2^2 + \sigma (- \beta x_1 + \alpha x_2)= 0.  \]
Again, for the solution $(x_1^\ast,x_2^\ast)=(0,0)$ we have $\| J^{-1} \| = \sigma^{-1}$. Building the Sylvester matrix in the monomial basis, we obtain

\[ R_{Sylv} = \begin{bmatrix}
\sigma \beta x_2 & \sigma \alpha & 1 \\
 x_2^2 + \sigma \alpha x_2 & - \sigma \beta & 0\\
0 & x_2^2 + \sigma \alpha x_2 & -\sigma \beta
\end{bmatrix}.  \]
As predicted by the theory, $x_2^\ast=0$ is an eigenvalue with corresponding right and left eigenvectors, respectively, $v=\begin{bmatrix}
1 & 0 & 0
\end{bmatrix}^T$ and $w=\begin{bmatrix}
\sigma \beta  & \sigma \alpha & 1
\end{bmatrix}^T$. Moreover, it is readily checked that, as expected, $w^T R_{Sylv}'(0) v = \sigma^2$. Therefore,  
\[ \kappa(x_2^{\ast},R_{Sylv}) = \frac{\sqrt{1+\sigma^2}}{\sigma^2} > \sigma^{-2}.  \]

\end{example}

Theorem~\ref{thm:condSylvester} mathematically explains the numerical 
difficulties that practitioners 
have been experiencing with hidden variable resultant methods based on the Sylvester 
resultant. There are successful bivariate rootfinders based on this 
methodology~\cite{Sorber_14_01} for low degree polynomial systems and 
it is a testimony to those authors that 
they have developed algorithmic remedies (not cures) for the inherent 
numerical instability. 

We emphasize that Theorem~\ref{thm:condSylvester} holds for any normalized 
degree-graded polynomial basis. Thus, the mild numerical instability cannot, in 
general, be overcome by working in a different degree-graded polynomial basis.  

%\subsubsection*{Further remarks}%\label{rk:CayleySylvesterComparison}
The example in the proof of Theorem~\ref{thm:condSylvester} is 
quite alarming for a practitioner since if $u$ is the unit machine roundoff, then 
we have $\| J(0,0)^{-1} \|_2 = u^{-1/2}$ and 
$\kappa(x_2^{\ast},R_{Sylv}) = u^{-1}$. Thus, a numerical rootfinder 
based on the Sylvester matrix may entirely miss a solution that 
has a condition number larger than $u^{-1/2}$. A stable rootfinder should not miss 
such a solution.  

When $d = 2$, we can use Theorem~\ref{thm:condTheorem} and Lemma~\ref{lem:SylvesterJacobian} 
to conclude that the ratio between the conditioning of the Cayley and Sylvester resultant 
matrices for the same eigenvalue $x_2^\ast$ 
is equal to $\| v \|_2/\| w \|_2$, where $v$ and $w$ are the right and left 
eigenvector of $R_{Sylv}(x_2^{\ast})$ associated with the eigenvalue $x_2^\ast$. 
This provides theoretical support for the numerical observations in~\cite{Nakatsukasa_15_01}. However, it seems 
difficult to predict {\em a priori} if the Cayley or Sylvester matrix 
will behave better numerically. For real polynomials and $d=2$, the 
Cayley resultant matrix is symmetric and this structure can be exploited~\cite{Nakatsukasa_15_01}. 
In the monomial basis, the Sylvester matrix is two stacked Toeplitz 
matrices (see~\eqref{eq:Monomial}). It may be that structural 
differences like these are more important than their relatively similar numerical 
properties when $d = 2$. 

\section{A discussion on relative and absolute conditioning}\label{sec:absRel}
Let $X(D)$ be the solution of a mathematical problem depending on data $D$. In general, with the very mild assumption that $D$ and $X$ lie in Banach spaces, it is possible to define the absolute condition number of the problem by perturbing the data to $D + \delta D$ and studying the behaviour of the perturbed solution $\hat X(D+\delta D) = X(D) + \delta X(D,\delta D)$:
 \[ \cabs = \lim_{\epsilon \rightarrow 0} \sup_{\| \delta D \| \leq \epsilon} \frac{ \| \delta X \|}{\| \delta D \|}.   \]
 Similarly, a relative condition number can be defined by looking at the limit ratios of \emph{relative} changes.
 \[ \crel = \lim_{\epsilon \rightarrow 0} \sup_{\| \delta D \| \leq \epsilon \| D \|} \frac{ \| \delta X \|}{\| \delta D \|} \frac{\| D \|}{\| X \|} = \cabs \frac{\|D\|}{\| X \|}.   \]

In this paper, we have compared two absolute condition numbers. One is given by Proposition~ \ref{def:conditioning}: there, $X=\underline{x}^*$ is a solution of \eqref{eq:polynomialSystem} while $D=(p_1,\dots,p_d)$ is the set of polynomials in~\eqref{eq:polynomialSystem}. The other is given by Lemma~\ref{lem:conditionExpression}, where $D$ is a matrix polynomial and $X=x_d^*$ is the $d$th component of $\underline{x}^*$.

To quote N.~J.~Higham~\cite[p.~56]{Higham_08_01}: ``Usually, it is 
the relative condition number that is of interest, but it is more convenient to state results 
for the absolute condition number''. This remark applies to our analysis as well. We have found it convenient to study the absolute condition number, but when attempting to solve the rootfinding problem in floating point arithmetic it is natural to allow for relatively small perturbations, and thus to study the relative condition number. Hence, a natural question is whether the exponential increase of the absolute condition number 
in Theorem~\ref{thm:condTheorem} and the squaring in Theorem~\ref{thm:condSylvester} 
causes a similar effect in the relative condition number.  

%To fully understand 
%the analogues of theorems~\ref{thm:condTheorem} and~\ref{thm:condSylvester}
%for relative conditioning remains a challenge. 

It is not immediate that the exponential increase of the absolute condition number 
leads to the same effect in the relative sense. We have found examples where 
the exponential increase of the absolute condition number is perfectly counterbalanced 
by an exponentially small Cayley resultant matrix. For instance, linear polynomial systems, 
when the Cayley resultant method is equivalent to Cramer's rule, fall into this category. 
%We believe this makes a clean 
%and precise statement difficult to derive. 
In the relative sense, it may be possible to show that the hidden variable resultant method 
based on Cayley or Sylvester is either numerically unstable during the construction of 
the resultant matrix or the resultant matrix has an eigenvalue that is more 
sensitive to small relative
perturbations than hoped. We do not know yet how to make such a statement precise. 

Instead, we provide an example that shows that 
the hidden variable resultant method remains numerically unstable 
in the relative sense.  Let $u$ be a sufficiently small real positive 
parameter and $d\geq 2$. 
Consider the following polynomial system:  
\[
\begin{aligned} 
 p_{2i-1}(\underline{x}) &= x_{2i-1}^2 + u\left(\tfrac{\sqrt{2}}{2}x_{2i-1} + \tfrac{\sqrt{2}}{2}x_{2i}\right),\\
 p_{2i}(\underline{x}) &= x_{2i}^2 + u\left(\tfrac{\sqrt{2}}{2}x_{2i} - \tfrac{\sqrt{2}}{2}x_{2i-1}\right), \qquad 1\leq i\leq \lfloor d/2\rfloor,
 \end{aligned} 
\]
where if $d$ is odd then take $p_d(\underline{x}) = x_d^2 + ux_d$. Selecting $\Omega=[-1,1]^d$, we have that $\| p_i \|_\infty = 1 + \sqrt{2} u$ for $1\leq i\leq d$, 
except possibly $\| p_d \|_\infty = 1+u$ if $d$ is odd. %\vntodo{Is this the $\Omega$ you had in mind for this example?}
It can be shown that the origin\footnote{By a change of variables, there is an analogous example with a solution anywhere in $[-1,1]^d$. %\vntodo{We should probably check this carefully.. How does f Cayley change?}
}, $\underline{x}^{\ast}$, is a simple root, 
%$\det(J(\underline{x}^{\ast})) = 2^{-\lfloor d/2\rfloor}$, 
$\det(J(\underline{x}^{\ast})) = u^d$, 
%$\|J(\underline{x}^{\ast})^{-1}\|_2 = 2^{1/2}$, and  
$\|J(\underline{x}^{\ast})^{-1}\|_2 = u^{-1}$, and 
that
\[
 f_{Cayley}(s_1,\ldots,s_{d-1},t_1,\ldots,t_{d-1}) = \prod_{k=1}^{d-1} (s_k + t_k) x_d^2 + \mathcal{O}(u). 
\]
Thus, neither the polynomials $p_i$ or the resultant matrix 
$R_{Cayley}(x_d)$ are small. In such an example, the relative condition number
will exhibit the same behavior as the absolute condition number. In particular, 
the relative condition number of an eigenvalue of $R_{Cayley}(x_d)$ may be larger 
than the relative condition number of the corresponding solution by a factor 
that grows exponentially with $d$. 

The same example (for $d=2$), and a similar argument, applies 
to the Sylvester matrix showing the conditioning can 
be squared in the relative sense too.

\section{Future outlook}\label{sec:futureWork}
In this paper we have shown that two popular hidden variable 
resultant methods based on the Sylvester and Cayley matrices 
are numerically unstable. Our analysis is for degree-graded polynomial bases and does not include the Lagrange basis or certain sparse bases. We believe that the analysis of the Cayley matrix in Section~\ref{sec:Cayley} could be extended to include general polynomial bases, though the analysis in  Section~\ref{sec:Sylvester} for the Sylvester matrix is more intimately connected to degree-graded bases.  We hesitantly suggest that hidden variable resultant methods are inherently plagued by numerial instabilities, and that neither other polynomial bases nor other resultants can avoid a worst-case scenario that we have identified in this paper.
%In particular, we hesitantly suggest that it could be the case that 
%almost all practical resultants have numerical issues. 
We do not know
exactly how to formulate such a general statement, but we note that 
practitioners are widely experiencing problems with 
hidden variable resultant methods. In particular, we do not
know of a numerical multidimensional rootfinder based on resultants 
that is robust for polynomial systems of large degree $n$ and high $d$. 

However, at the moment the analysis that we offer here is limited to the Cayley and Sylvester matrices. Despite our doubts that it exists, we would celebrate the 
discovery of a resultant matrix that can be constructed numerically 
and that provably does not lead to a numerically unstable hidden variable 
resultant method.  
This would be a breakthrough in global rootfinding 
with significant practical applications as it might allow~\eqref{eq:polynomialSystem}
to be converted into a large eigenproblem without confronting conditioning issues. 
Solving high-dimensional and large degree polynomial systems would then be 
restricted by computational cost rather than numerical accuracy. 

Finally, we express again our hope that this paper, while 
appearing rather negative, will have a positive long-term impact on future research into
numerical rootfinders. %With care, Cayley or Sylvester resultant-based methods can 
%become a practical tool for multidimensional rootfinding. 
% 
% Perhaps, the community can now channel more focus on using techniques, such as 
% symbolic manipulations, to overcome the conditioning issues that we have identified. 
% As an initial idea one could search over the region of interest $\Omega^d$ 
% to discover when the worse-case situation in 
% Theorem~\ref{thm:condTheorem} occurs (if at all). The standard
% hidden variable resultant method can then be employed 
% in harmless regions without concern, while more care is required 
% in the potentially dangerous areas.

% \section*{Conclusion}
% We showed that hidden variable resultant methods based on resultants of 
% Jacobian-type are numerically unstable for multidimensional rootfinding, 
% by a factor that grows exponentially with the dimension. Two popular 
% resultants, known as Cayley and Sylvester resultant matrices, are of 
% Jacobian-type and this mathematically explains the numerical issues that 
% practitioners have been reporting. 

\section*{Acknowledgments} 
We thank Yuji Nakatsukasa, one of our closest colleagues, for his 
insightful discussions during the writing of~\cite{Nakatsukasa_15_01} that 
ultimately lead us to consider conditioning issues more closely. We also 
thank Anthony Austin and Martin Lotz for carefully reading a draft and providing us with 
excellent comments. 
%We have benefited from rootfinding discussions with Daniel Bates, John Boyd, 
%Andrew Sommese, and Bor Plestenjak. 
While this manuscript was in a much earlier 
form Martin Lotz personally sent it to Gregorio Malajovich for his comments. 
Gregorio's comprehensive and enthusiastic reply encouraged us to proceed with
renewed vigor.

\appendix  
\section{A generalization of Clenshaw's algorithm for degree-graded polynomial bases}\label{sec:appendix} 
This appendix contains the tedious, though necessary, proofs required in 
Section~\ref{subsec:GeneralizedClenshaw} for Clenshaw's algorithm for 
evaluating polynomials expressed in a degree-graded basis. 

\begin{proof}[Proof of Lemma~\ref{lem:ClenshawDegreeGraded}]
 By rearranging~\eqref{eq:ClenshawLikeAlg} we have $a_k = b_k[p](x) - (\alpha_kx + \beta_k)b_{k+1}[p](x) - \sum_{j=k+1}^{n-1} \gamma_{j,k+1}b_{j+1}[p](x)$. Thus,
 \[
    p(x) = a_0\phi_0(x) + \sum_{k=1}^n \!\left[b_k[p](x) - (\alpha_kx+\beta_k)b_{k+1}[p](x) -\!\! \sum_{j=k+1}^{n-1} \gamma_{j,k+1}b_{j+1}[p](x)\right]\!\phi_k(x).
 \]
Now, by interchanging the summations and collecting terms we have 
 \[
 \begin{aligned}
  p(x) & = a_0\phi_0(x) + \sum_{k=1}^n\phi_k(x)b_k[p](x) - \sum_{k=2}^n \left(\alpha_{k-1}x+\beta_{k-1}\right)\phi_{k-1}(x)b_k[p](x)\\
  & \qquad\quad\qquad\qquad\qquad\qquad\qquad\qquad\qquad\qquad -\sum_{j=2}^{n-1}\left[\sum_{k=1}^{j-1}\gamma_{j,k+1}\phi_{k}(x)\right]b_{j+1}[p](x)\\
     & = a_0\phi_0(x) + \phi_1(x)b_1[p](x)\\
     &\qquad\qquad\qquad+\sum_{j=1}^{n-1}\left[\phi_{j+1}(x) -(\alpha_{j}x+\beta_{j})\phi_{j}(x) - \sum_{k=1}^{j-1} \gamma_{j,k+1}\phi_{k}(x)\right]b_{j+1}[p](x)\\
 \end{aligned} 
 \]
 Finally, using~\eqref{eq:degreeGradedRecurrence} we obtain 
 \[
  p(x)= a_0\phi_0(x) + \phi_1(x)b_1[p](x) + \sum_{j=1}^{n-1} \gamma_{j,1}\phi_0(x)b_{j+1}[p](x),
 \]
as required.
\end{proof}

Section~\ref{subsec:GeneralizedClenshaw} also shows that Clenshaw's algorithm 
connects to the quotient $(p(x)-p(y))/(x-y)$. To achieve this we need an 
immediate result that proves a different recurrence relation on the Clenshaw shifts
to~\eqref{eq:ClenshawLikeAlg}. The proof involves tedious algebraic manipulations 
and mathematical strong induction.
\begin{lemma} 
Let $n$ be an integer, $\phi_0,\ldots,\phi_n$ a degree-graded basis 
satisfying~\eqref{eq:degreeGradedRecurrence}, and $b_{n+1}[p],\ldots,b_1[p]$ the Clenshaw 
shifts satisfying~\eqref{eq:ClenshawLikeAlg}. Then, for $1\leq j\leq n$, 
 \[
  b_{j}[\phi_{n+1}](x) = (\alpha_nx+\beta_n)b_{j}[\phi_{n}](x) + \sum_{s=j+1}^n\gamma_{n,s}b_{j}[\phi_{s-1}](x).
 \]
 \label{lem:NewRecurrence}
\end{lemma}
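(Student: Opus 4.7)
The plan is to establish the identity by strong backward induction on $j$, descending from $j=n$ to $j=1$. Throughout, I adopt the convention that each symbol $b_j[\phi_k](x)$ is computed by running~\eqref{eq:ClenshawLikeAlg} on $\phi_k$ viewed intrinsically as a polynomial of degree $k$, so that $b_{k+1}[\phi_k]=0$, $b_k[\phi_k]=1$, and in the defining recurrence the only nonzero expansion coefficient is $a_k=1$. This convention is what makes the statement unambiguous when the symbol $b_j[\phi_k]$ is applied to the three different polynomials $\phi_{n+1}$, $\phi_n$, and $\phi_{s-1}$.

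The base case $j=n$ is immediate: applied at level $k=n$ to $p=\phi_{n+1}$ (degree $n+1$), the inner sum in~\eqref{eq:ClenshawLikeAlg} runs from $i=n+1$ to $n$ and is vacuous, and $b_{n+1}[\phi_{n+1}]=1$, so $b_n[\phi_{n+1}](x)=\alpha_n x+\beta_n$, which agrees with the right-hand side since $b_n[\phi_n]=1$ and the sum over $s$ is empty. For the inductive step with $j\leq n-1$, I would expand $b_j[\phi_{n+1}](x)$ via~\eqref{eq:ClenshawLikeAlg}, splitting off the $i=n$ contribution $\gamma_{n,j+1}b_{n+1}[\phi_{n+1}]=\gamma_{n,j+1}$ from the sum $\sum_{i=j+1}^{n}$. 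Substituting the inductive hypothesis into $b_{j+1}[\phi_{n+1}]$ and into $b_{i+1}[\phi_{n+1}]$ for $j+1\leq i\leq n-1$ produces both terms proportional to $(\alpha_n x+\beta_n)$ and double sums with kernel $\gamma_{n,s}\,b_{\cdot}[\phi_{s-1}]$. The $(\alpha_n x+\beta_n)$ terms then collapse, by direct recognition of~\eqref{eq:ClenshawLikeAlg} applied to $\phi_n$, into $(\alpha_n x+\beta_n)b_j[\phi_n](x)$.

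It remains to identify the leftover pieces with $\sum_{s=j+1}^{n}\gamma_{n,s}b_j[\phi_{s-1}](x)$. The constant $\gamma_{n,j+1}$ handles $s=j+1$ since $b_j[\phi_j]=1$. For $s\geq j+2$, expanding $b_j[\phi_{s-1}]$ by~\eqref{eq:ClenshawLikeAlg} (applied to $\phi_{s-1}$ of degree $s-1$ with vanishing coefficient at level $j$) yields $(\alpha_j x+\beta_j)b_{j+1}[\phi_{s-1}]+\sum_{i=j+1}^{s-2}\gamma_{i,j+1}b_{i+1}[\phi_{s-1}]$. The terms linear in $(\alpha_j x+\beta_j)$ match directly, while the remaining double sum matches after the Fubini-type interchange
\[
\sum_{s=j+2}^{n}\sum_{i=j+1}^{s-2}\;=\;\sum_{i=j+1}^{n-1}\sum_{s=i+2}^{n},
\]
both expressing the index region $\{(i,s):j+1\leq i\leq n-1,\ i+2\leq s\leq n\}$. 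The main obstacle is the bookkeeping across Clenshaw recurrences applied to polynomials of several different degrees simultaneously, together with verifying that every boundary/empty sum cancels correctly; once the index swap and the degree conventions are pinned down, no further ingredient beyond~\eqref{eq:ClenshawLikeAlg} itself is required.
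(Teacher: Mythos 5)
Your proof is correct and follows essentially the same route as the paper's: backward induction on $j$ with base case $j=n$, expansion of $b_j[\phi_{n+1}]$ via~\eqref{eq:ClenshawLikeAlg}, substitution of the inductive hypothesis, and a summation interchange that lets the collected terms be recognized as the Clenshaw recurrences for $b_j[\phi_n]$ and $b_j[\phi_{s-1}]$. Your explicit statement of the degree convention for $b_j[\phi_k]$ (so that $b_{k+1}[\phi_k]=0$ and $b_k[\phi_k]=1$) is a welcome clarification that the paper leaves implicit.
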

\begin{proof}
We proceed by induction on $j$. Let $j = n$. We have, by~\eqref{eq:ClenshawLikeAlg}, 
\[
 b_n[\phi_{n+1}](x) = (\alpha_nx + \beta_n)b_{n+1}[\phi_{n+1}](x) = (\alpha_nx + \beta_n)b_{n}[\phi_{n}](x),
\]
where the last equality follows because $b_{n+1}[\phi_{n+1}](x) = b_n[\phi_n](x) = 1$. 
% Now, let $j = n-1$. We have, by~\eqref{eq:ClenshawLikeAlg} and~\eqref{eq:jn},
% \[
% \begin{aligned} 
%   b_{n-1}[\phi_{n+1}] &= (\alpha_{n-1}x + \beta_{n-1})b_{n}[\phi_{n+1}] + \gamma_{n,n}b_{n+1}[\phi_{n+1}]\\
%   & =  (\alpha_{n-1}x + \beta_{n-1})(\alpha_{n}x + \beta_{n})b_{n}[\phi_{n}] + \gamma_{n,n}b_{n+1}[\phi_{n+1}].
% \end{aligned} 
% \]
% Furthermore, since $b_{n-1}[\phi_{n}] = (\alpha_{n-1}x + \beta_{n-1})b_{n}[\phi_{n}]$ and 
% $b_{n+1}[\phi_{n+1}] = b_{n-1}[\phi_{n-1}] = 1$ we have
% \[
%    b_{n-1}[\phi_{n+1}] = (\alpha_nx+\beta_n)b_{n-1}[\phi_{n}] + \gamma_{n,n}b_{n}[\phi_{n-1}].
% \]
Now, suppose the result holds for $j = n,n-1,\ldots,k+1$. We have, by~\eqref{eq:ClenshawLikeAlg} and 
the inductive hypothesis,
\[
\begin{aligned} 
   b_k[\phi_{n+1}](x) &= (\alpha_kx + \beta_k)b_{k+1}[\phi_{n+1}](x) + \sum_{j=k+1}^{n} \gamma_{j,k+1}b_{j+1}[\phi_{n+1}](x)\\
   & = (\alpha_kx + \beta_k)\left[(\alpha_nx+\beta_n)b_{k+1}[\phi_{n}](x) + \sum_{s=k+2}^n \gamma_{n,s}b_{k+1}[\phi_{s-1}](x)\right] \\
    &\hspace{1cm}+ \sum_{j=k+1}^{n-1} \gamma_{j,k+1}\left[(\alpha_nx+\beta_n)b_{j+1}[\phi_{n}](x) + \sum_{s=j+2}^n \gamma_{n,s}b_{j+1}[\phi_{s-1}](x)\right] \\
    &\hspace{2cm}+ \gamma_{n,k+1}b_{n+1}[\phi_{n+1}](x).\\
\end{aligned}
\]
By interchanging the summations and collecting terms we have 
\[
\begin{aligned} 
 b_k[\phi_{n+1}](x) & = (\alpha_nx+\beta_n)\left[(\alpha_kx + \beta_k)b_{k+1}[\phi_n](x) + \sum_{j=k+1}^{n-1}\gamma_{j,k+1}b_{j+1}[\phi_n](x)\right] \\
    & \hspace{.6cm} + \sum_{s=k+3}^n \gamma_{n,s}\left[(\alpha_kx + \beta_k)b_{k+1}[\phi_{s-1}](x) + \sum_{j=k+1}^{s-2}\gamma_{j,k+1}b_{j+1}[\phi_{s-1}](x)\right]\\
    & \hspace{2cm}+ \gamma_{n,k+2}(\alpha_kx+\beta_k)b_{k+1}[\phi_{k+1}](x) + \gamma_{n,k+1}b_{n+1}[\phi_{n+1}](x)\\
    & = (\alpha_nx+\beta_n)b_k[\phi_n] + \sum_{s=k+1}^n \gamma_{n,s} b_k[\phi_{s-1}],
\end{aligned}
\]
where in the last equality we used~\eqref{eq:ClenshawLikeAlg}, 
$(\alpha_kx + \beta_k)b_{k+1}[\phi_{k+1}](x) = b_k[\phi_{k+1}](x)$, and 
$b_{n+1}[\phi_{n+1}](x) = b_k[\phi_k](x) = 1$. 
\end{proof}

The recurrence from Lemma~\ref{lem:NewRecurrence} allows us to prove Theorem~\ref{thm:diffDegreeGraded}. 
\begin{proof}[Proof of Theorem~\ref{thm:diffDegreeGraded}] 

\textbf{Case 1: $\mathbf{x\neq y}$.}
Since for a fixed $y$ the Clenshaw shifts are linear, i.e., $b_j[c_1\phi_i+c_2\phi_k](y) = c_1b_j[\phi_i](y) +c_2 b_j[\phi_k](y)$ for constants $c_1$ and $c_2$, it 
is sufficient to prove the theorem for $p=\phi_n$ for $n\geq 1$.

We proceed by induction on $n$. For $n = 1$ we have 
\[
\sum_{j=0}^{n-1} \alpha_j b_{j+1}[\phi_{n+1}](y)\phi_j = \alpha_0 b_1[\phi_1](y) = \alpha_0 = \frac{\phi_1(x)-\phi_1(y)}{x-y}.
\]

Assume that the result holds for $n = 1,\ldots, k-1$. From 
the inductive hypothesis, we have
\[
\begin{aligned}
\!\frac{\phi_{k+1}(x) - \phi_{k+1}(y)}{x-y} &= \alpha_k\phi_k(x) + (\alpha_kx + \beta_k)\frac{\phi_{k}(x) - \phi_{k}(y)}{x-y}  \\
& \qquad\qquad\qquad\qquad\qquad\qquad+ \sum_{j=1}^k\!\gamma_{k,j}\frac{\phi_{j-1}(x) - \phi_{j-1}(y)}{x-y}\\
& = \alpha_k\phi_k(x) + (\alpha_kx+\beta_k)\sum_{j=0}^{k-1} \alpha_jb_{j+1}[\phi_k](y)\phi_j(x) \\
& \qquad\qquad\qquad\qquad\qquad\qquad+ \sum_{j=1}^k\gamma_{k,j}\sum_{s=0}^{j-2}\alpha_sb_{s+1}[\phi_{j-1}](y)\phi_s(x).
\end{aligned} 
\]
Moreover, by interchanging the summations and collecting terms we have 
\[
\begin{aligned} 
\frac{\phi_{k+1}(x) - \phi_{k+1}(y)}{x-y} & = \alpha_k\phi_k(x) + (\alpha_kx+\beta_k)\alpha_{k-1}b_k[\phi_k](y)\phi_{k-1}(x) \\
&\!\!\!+ \sum_{j=0}^{k-2}\alpha_j\left[(\alpha_kx+\beta_k)b_{j+1}[\phi_k](y) + \sum_{s=j+2}^k\gamma_{k,s}b_{j+1}[\phi_{s-1}](y)\right]\phi_j(x).
%& = \alpha_kb_{k+1}[\phi_{k+1}]\phi_k(x) + \alpha_{k-1}b_{k}[\phi_{k+1}]\phi_{k-1}(x) \\
%&+ \sum_{j=0}^{k-2} \alpha_j\left((\alpha_kx+\beta_k)b_{j+1}[\phi_k] + \sum_{s=j+2}^k \gamma_{k,s}b_{j+1}[\phi_{s-1}]\right)\phi_j(x)\\
%& = \alpha_kb_{k+1}[\phi_{k+1}]\phi_k(x) + \alpha_{k-1}b_{k}[\phi_{k+1}]\phi_{k-1}(x) + \sum_{j=0}^{k-2} \alpha_jb_{j+1}[\phi_{k+1}]\phi_j(x). 
\end{aligned} 
\]
Finally, since $b_{k+1}[\phi_{k+1}](y) = 1$, $b_{k}[\phi_{k+1}](y) = (\alpha_kx+\beta_k)b_k[\phi_k](y)$, and by~\eqref{eq:ClenshawLikeAlg}, we have
\[
\begin{aligned}
 \frac{\phi_{k+1}(x) - \phi_{k+1}(y)}{x-y} &= \alpha_kb_{k+1}[\phi_{k+1}](y)\phi_k(x) + \alpha_{k-1}b_{k}[\phi_{k+1}](y)\phi_{k-1}(x) \\ 
 & \qquad\qquad\qquad + \sum_{j=0}^{k-2} \alpha_jb_{j+1}[\phi_{k+1}](y)\phi_j(x)
\end{aligned}
\]
and the result follows by induction. 

\textbf{Case 2: $\mathbf{x = y}$.}
 Immediately follows from $x\neq y$ by using L'Hospital's rule 
 on~\eqref{eq:diffDegreeGraded}. 
\end{proof}


\begin{thebibliography}{3}
%\bibitem{Asakura_09_01} {\sc J. Asakura, T. Sakurai, H. Tadano, T. Ikegami, and K. Kimura}, {\em A numerical method for nonlinear eigenvalue problems using contour integrals}, {JSIAM Letters}, 1 (2009), pp.~52--55.
\bibitem{Allgower_92_01} {\sc E. L. Allgower, K. Georg, and R. Miranda}, {\em The method of resultants for computing real solutions of polynomial systems}, {SIAM J. Numer. Anal.}, 29 (1992), pp.~831--844.
\bibitem{Asakura_10_01} {\sc J. Asakura, T. Sakurai, H. Tadano, T. Ikegami, and K. Kimura}, {\em A numerical method for polynomial eigenvalue problems using contour integral}, {Japan J. Indust. Appl. Math.}, 27 (2010), pp.~73--90.
\bibitem{Bates_13_01} {\sc D. J. Bates, J. D. Hauenstein, A. J. Sommese, and C. W. Wampler}, {\em Numerically Solving Polynomial Systems with Bertini}, SIAM, 2013. 
\bibitem{Bini_06_01} {\sc D. A. Bini and A. Marco}, {\em Computing curve intersection by means of simultaneous iterations}, {Numer. Algor.}, 43 (2006), pp.~151--175.
\bibitem{Bini_13_01} {\sc D. A. Bini and V. Noferini}, {\em Solving polynomial eigenvalue problems by means of the Ehrlich-Aberth method}, {Linear Algebra Appl.}, 439 (2013), pp.~1130--1149.
\bibitem{Bondyfalat_00_01} {\sc D. Bondyfalat, B. Mourrain, and V. Y. Pan}, {\em Solution of a polynomial system of equations via the eigenvector computation}, {Linear Algebra Appl.}, 319 (2000), pp.~193--209.
\bibitem{Boyd_02_01} {\sc J. P. Boyd}, {\em Computing zeros on a real interval through Chebyshev expansion and polynomial rootfinding}, {SIAM J. Numer. Anal.}, 40 (2002), pp.~1666--1682. 
\bibitem{Boyd_14_01} {\sc J. P. Boyd}, {\em Solving Transcendental Equations: The Chebyshev Polynomial Proxy and Other Numerical Rootfinders}, {SIAM}, 2014.
\bibitem{Buchberger_65_01} {\sc B. Buchberger}, {\em An algorithm for finding the basis elements of the residue class ring of a zero dimensional polynomial ideal}, {J. Symbolic Comput.}, 41 (2006), pp.~475--511.
\bibitem{Buse_05_01} {\sc L. Bus\'{e}, H. Khalil, and B. Mourrain}, {\em Resultant-based methods for plane curves intersection problems}, {Computer algebra in scientific computing}, {Springer Berlin Heidelberg}, 3718 (2005), pp.~75--92.
\bibitem{Cattani_05_01} {\sc E. Cattani and A. Dickenstein}, {\em Introduction to residues and resultants}, in, {\sc A. Dickenstein and I. Z. Emiris, editors}, {\em Solving Polynomial Equations. Foundations, Algorithms, and Applications}, Springer, 2005.
\bibitem{Cayley_1848_01} {\sc A. Cayley}, {\em On the theory of elimination}, {Cambridge and Dublin Math. J. III}, (1848), pp.~116--120.
\bibitem{Chionh_02_01} {\sc E.-W.\ Chionh, M.\ Zhang, and R.\ N.\ Goldman}, {\em Fast computation of the B\'{e}zout and Dixon resultant matrices}, {J.\ Symb. Comput.}, 33 (2002), pp.~13--29.
\bibitem{Clenshaw_55_01} {\sc C. W. Clenshaw}, {\em A note on the summation of Chebyshev series}, {Math. Comput.}, 9 (1955), pp.~118--120.
\bibitem{Cox_13_01} {\sc D. A. Cox, J. Little, and D. O'Shea}, {\em Using Algebraic Geometry}, {Springer}, 2013. 
\bibitem{Teran_10_01} {\sc F. De Teran, F. M. Dopico, and D. S. Mackey}, {\em Fiedler companion linearizations and the recovery of minimal indices}, {SIAM J. Mat. Anal. Appl.}, 31 (2010), pp.~2181--2204.
\bibitem{Dressen_12_01} {\sc P. Dreesen, K. Batselier, and B. De Moor}, {\em Back to the roots: Polynomial system solving, linear algebra, systems theory}, {Proc. 16th IFAC Symposium on System Identification (SYSID)}, 2012, pp.~1203--1208.
\bibitem{Emiris_94_01} {\sc I. Z. Emiris}, {\em Sparse elimination and applications in kinematics}, {Dissertation, University of California, Berkeley}, 1994.
\bibitem{Emiris_02_01} {\sc I. Z. Emiris and V. Z. Pan}, {\em Symbolic and numeric methods for exploiting structure in constructing resultant matrices}, {J. Symbolic Comput.}, 33 (2002), pp.~393--413.
\bibitem{Gelfand_08_01} {\sc I. M. Gelfand, M. Kapranov, and A. Zelevinsky}, {\em Discriminants, Resultants, and Multidimensional Determinants}, {Springer, Birkh\"{a}user, Boston}, 2008. 
\bibitem{Gemignani_13_01} {\sc L. Gemignani, and V. Noferini}, {\em The Ehrlich--Aberth method for palindromic matrix polynomials represented in the Dickson basis}, {Linear Algebra Appl.}, 438 (2013), pp.~1645--1666.
\bibitem{Gohberg_82_01} {\sc I. Gohberg, P. Lancaster, and L. Rodman}, {\em Matrix Polynomials}, {SIAM, Philadelphia, USA}, 2009, (unabridged republication of book first published by Academic Press in 1982).
\bibitem{Good_61_01} {\sc I. J. Good}, {\em The colleague matrix, a Chebyshev analogue of the companion matrix}, {The Quarterly Journal of Mathematics}, 12 (1961), pp.~61--68.
\bibitem{Higham_02_01} {\sc N. J. Higham}, {\em Accuracy and Stability of Numerical Algorithms}, {SIAM}, {2nd edition}, 2002.
\bibitem{Higham_08_01} {\sc N. J. Higham}, {\em Function of Matrices: Theory and Computation}, {SIAM}, 2008.
\bibitem{Horn_13_01} {\sc R. A. Horn, and C. R. Johnson}, {\em Matrix Analysis}, {Cambridge University Press}, {2nd edition}, New York, 2013.
\bibitem{Jonsson_05_01} {\sc G. J\'{o}nsson and S. Vavasis}, {\em Accurate solution of polynomial equations using Macaulay resultant matrices}, {Math. Comput.}, 74 (2005), pp.~221--262.
\bibitem{Kapur_95_01} {\sc D. Kapur and T. Saxena}, {\em Comparison of various multivariate resultant formulations}, {ACM Proceedings of the 1995 international symposium on Symbolic and algebraic computation}, 1995.
\bibitem{Kirwan_92_01} {\sc F. C. Kirwan}, {\em Complex Algebraic Curves}, {Cambridge University Press, Cambridge}, 1992.
\bibitem{Li_06_01} {\sc H. Li}, {\em A simple solution to the six-point two-view focal-length problem}, {Computer Vision–ECCV}, {Springer Berlin Heidelberg}, (2006), pp.~200--213.
\bibitem{Mackey_06_01} {\sc D. S. Mackey, N. Mackey, C. Mehl, and V. Mehrmann}, {\em Vector spaces of linearizations for matrix polynomials}, {SIAM J. Matrix Anal. Appl.}, 28 (2006), pp.~971--1004.
\bibitem{Manocha_92_01} {\sc D. Manocha and J. F. Canny}, {\em Multipolynomial resultants and linear algebra}, {Papers from the international symposium on Symbolic and algebraic computation. ACM}, 1992.
\bibitem{Demmel_94_01} {\sc D. Manocha and J. Demmel}, {\em Algorithms for intersecting parametric and algebraic curves I: simple intersections}, {ACM Trans. Graphics}, 13 (1994), pp.~73--100.
%\bibitem{Manocha_93_01} {\sc D. Manocha and J. F. Canny}, {\em Multipolynomial resultant algorithms}, {J. Symbolic Comput.}, 15 (1993), pp.~99--122.
\bibitem{Nakatsukasa_13_01} {\sc Y.\ Nakatsukasa, V.\ Noferini, and A.\ Townsend}, {\em Vector spaces of linearizations for matrix polynomials: a bivariate polynomial approach}, submitted. 
\bibitem{Nakatsukasa_15_01} {\sc Y.\ Nakatsukasa, V.\ Noferini, and A.\ Townsend}, {\em Computing the common zeros of two bivariate functions via B\'{e}zout resultants}, {Numer. Math.}, 129 (2015), pp.~181--209.
\bibitem{Ragnarsson_12_01} {\sc S. Ragnarsson and C. F. Van Loan}, {\em Block tensor unfoldings}, {SIAM J. Mat. Anal. Appl.}, 33 (2012), pp.~149--169.
\bibitem{Skrzipek_98_01} {\sc M.-R. Skrzipek}, {\em Polynomial evaluation and associated polynomials}, {Numer. Math.}, 79 (1998), pp.~601--613.
\bibitem{Sommese_05_01} {\sc A. J. Sommese and C. W. Wampler}, {\em The Numerical Solution of Systems of Polynomials Arising in Engineering and Science}, {World Scientific}, 2005. 
\bibitem{Sorber_14_01} {\sc L. Sorber, M. Van Barel, and L. De Lathauwer}, {\em Numerical solution of bivariate and polyanalytic polynomial systems}, {SIAM J. Numer. Anal.}, 52 (2014), pp.~1551--1572.
%\bibitem{Sun_06_01} {\sc W. Sun and H. Li}, {\em On the mixed Cayley--Sylvester resultant matrix}, {Artificial Intelligence and Symbolic Computation}, {Springer Berlin Heidelberg}, 2006, pp.~146--159.
\bibitem{Syam_04_01} {\sc M. I. Syam}, {\em Finding all real zeros of polynomial systems using multi-resultant}, {J. Comput. Appl. Math.}, 167 (2004), pp.~417--428.
\bibitem{Taslaman_15_01} {\sc L. Taslaman}, {\em An algorithm for quadratic eigenproblems with low rank damping }, {SIAM J. Matrix Anal. Appl.}, 36 (2015), pp.~251--272.
\bibitem{Tisseur_00_01} {\sc F.~Tisseur}, {\em Backward error and condition of polynomial eigenvalue problems}, Linear Algebra Appl., 309 (2000), pp.~339--361.
\bibitem{Tisseur_01_01} {\sc F. Tisseur and K. Meerbergen}, {\em The quadratic eigenvalue problem}, {SIAM Review}, 43 (2001), pp.~235--286.
\bibitem{Townsend_14_01} {\sc A. Townsend}, {\em Computing with functions of two variables}, {DPhil Thesis}, {The University of Oxford}, 2014. 
\bibitem{Townsend_13_01} {\sc A. Townsend and L. N. Trefethen}, {\em An extension of Chebfun to two dimensions}, {SIAM J. Sci. Comput.}, 35 (2013), C495--C518.
\bibitem{Weiss_93_01} {\sc J. Weiss}, {\em Resultant methods for the inverse kinematics problem}, {Computational kinematics}, {Springer Netherlands}, 28 (1993), pp.~41--52.
\end{thebibliography}
\end{document}